\newcommand\op{\operatorname{op}}
\newcommand\Alg{\operatorname{\bf Alg}}
\newcommand\Set{\operatorname{\bf Set}}
\newcommand\Pos{\operatorname{\bf Pos}}
\newcommand{\id}{\operatorname{id}}
\newcommand\colim{\operatorname{\it colim}}
\newcommand{\Mod}{\operatorname{\bf Mod}}
\newcommand{\Sind}{\operatorname{\bf Sind}}
\newcommand\ck{\mathcal {K}}
\newcommand\ca{\mathcal {A}}
\newcommand\cd{\mathcal {D}} %{\mathscr{D}}
\newcommand\cp{\mathcal {P}} 
\newcommand\cl{\mathcal {L}}
\newcommand\ce{\mathcal {E}}
\newcommand\cv{\mathcal {V}}
\newcommand\cn{\mathcal {N}}
\newcommand\ct{\mathcal {T}}
\newcommand\N{\mathbb{N}}
\newtheorem{theorem}{Theorem}[section]
\newtheorem{lemma}[theorem]{Lemma}
\newtheorem{birk}[theorem]{Birkhoff Variety Theorem}
\newtheorem{prop}[theorem]{Proposition}
\newtheorem{corollary}[theorem]{Corollary}
\theoremstyle{definition}
\newtheorem{defi}[theorem]{Definition}
\newtheorem{assump}[theorem]{Assumption}
\newtheorem{example}[theorem]{Example}
\newtheorem{remark}[theorem]{Remark}
\newtheorem{nota}[theorem]{Notation}
\numberwithin{equation}{section}
\begin{document}

\title[Varieties of ordered algebras as categories]{Varieties of ordered algebras\\ as categories}
\author{Ji{\v{r}}{\'{\i}} Ad{\'{a}}mek}
\author{Ji{\v{r}}{\'{\i}} Rosick{\'{y}}}
%\footnote{${}^\ast$Corresponding author.}
\thanks{Both authors were supported by the Grant Agency of the Czech Republic under the grants 22-02964S and 19-00902S} 

\begin{abstract}
A variety is a category of ordered (finitary) algebras presented by inequations between terms. We characterize categories  enriched over the category of posets which are equivalent to a variety. This is quite analogous to Lawvere's classical characterization of varieties of ordinary algebras.
We also  study the relationship of varieties to discrete Lawvere theories, and varieties as concrete categories over $\Pos$.
 
\end{abstract}

\maketitle

\section{Introduction}
Classical varieties of (finitary) algebras were characterized in the pioneering dissertation of Lawvere \cite{L} as precisely the categories of models of Lawvere theories, see Section~5. Lawvere further characterized varieties  as the categories   with  effective equivalence relations which have an abstractly finite, regularly projective regular generator. In \cite{A} we have simplified this:  a category is equivalent to a variety iff it has

(1) reflexive coequalizers and kernel pairs, and

(2)  an abstractly finite, effectively projective strong generator $G$.

\noindent
Effective projectivity means that the hom-functor of $G$ preserves reflexive coequalizers  -- we recall this in Section~2.
Abstract finiteness is a condition much weaker than finite generation (see Example \ref{E:abstract}).
 We have also  presented another characterization: varieties are precisely the free completions of duals of Lawvere theories under sifted colimits \cite{AR}.

The aim of our paper is to present a categorical characterization of varieties of ordered algebras. These are classes of ordered $\Sigma$-algebras (for finitary signatures  $\Sigma$) that are specified by inequations between terms. Example: ordered monoids, or ordered monoids whose neutral element is the least one.
 Our characterization of categories  equivalent to varieties of ordered algebra turns out to be analogous to the conditions (1) and (2) above,  but we have to work in the realm of \textit{ categories enriched over} $\Pos$, the (cartesian closed) category of posets and monotone maps. That is, a category is  always equipped with a partial order on every hom-set, and composition is monotone. The concepts used by Lawvere need to be modified accordingly. Whereas in ordinary category one works with regular 
epimorphisms (coequalizers of parallel pairs) we work with \textit{subregular epimorphisms} $e\colon X\to Y$ which are the coinserters of parallel pairs (i.e. there exist $u_0$, $u_1\colon U\to X$ such that $e$ is universal w.r.t. $e u_0\leq e u_1$). This leads to a modification  of 
effective  projectivity: we call $G$ subeffective projective if its hom-functor into $\Pos$ preserves reflexive coinserters. Our
 main result characterizes varieties of ordered algebras  via 
strong generators  that  are abstractly finite subeffective projectives.

Power introduced discrete Lawvere theories \cite{P} that, for categories enriched over $\Pos$, we recall in Section~5. We prove that varieties of ordered algebras are precisely the enriched models of discrete Lawvere theories. From this Kurz and Velebil \cite{KV}  derived that they are precisely the free completions of duals of discrete Lawvere theories under (enriched) sifted colimits; we present a simplified proof in Section~5.

We can also view a variety $\cv$ of ordered algebras as a concrete category by considering its  forgetful functor $U\colon \cv \to \Pos$.  Using the above results, we  derive a  characterization of varieties as concrete categories  in Section~6.

\vskip 2mm
\textbf{Related Work.}  Varieties of (possibly infinitary) ordered algebras were studied already in the 1970's by Bloom \cite{B1}, and they were characterized as concrete categories by Bloom and Wright \cite{BW}. In Section~6 we show that the characterization in the latter paper, when restricted  to finitary signatures, is closely related to our main result. However, we have not found an easy way of deriving one of those results from the other one.

Kurz and Velebil published more recently a fundamental paper  on this topic \cite{KV} in which the main subject is the exactness property for categories enriched over $\Pos$. The definition in \cite{KV} is quite natural, but rather involved, based on the technical concept of congruence (Definition 3.8 in loc.cit.). For exact enriched categories the characterization of finitary varieties in their Theorem 5.9 is essentially the same as in our  main result, Theorem 4.7 below. So the main  message of our paper is that one does not need exactness to characterize varieties of ordered algebras as abstract categories. (The exactness is `condensed' into properties of the given generator.) Kurz and Velebil also proved that finitary varieties of ordered algebras are precisely the monadic categories over $\Pos$ for strongly finitary monads; see \cite{ADV} for a simplified proof.

\vskip 3mm
\textbf{Acknowledgements.} The presentation of our results has been improved by suggestions of the referees, for which the authors are very grateful.

\section{Lawvere's Characterization of varieties}\label{sec1}% 2, vlozeno Introduction jako 1

We recall shortly two of the major results of Lawvere's  famous dissertation: a characterization of categories equivalent to varieties in the classical sense (classes of $\Sigma$-algebras presented by equations) and algebraic theories as a syntax for varieties. Varieties are considered as  categories  with homomorphisms as morphisms.

Let $\Sigma$ be a finitary signature. Given a variety $\ck$ of $\Sigma$-algebras, every set generates  a free algebra of $\ck$. Denote by $G$ the free algebra on one generator. Then $G$ is
\begin{enumerate}
\item[(a)] \textit{abstractly finite} which
means that all copowers $M.G= \coprod\limits_{M} G$ ($M$ a set) exist and every morphism $f\colon G \to M. G$ factorizes through a finite subcopower  ${M'}. G \hookrightarrow {M}. G$ ($M'\subseteq M$ finite),
 
\item[(b)] a \textit{strong  generator with copowers}, i.e., all copowers ${M} . G$ exist and every object $X$ is an extremal quotient of a copower via the canonical map $c_X=[f] \colon \coprod\limits_{f\colon G\to X} G \to X$, and 

\item[(c)] a \textit{regular projective}, i.e., $\ck(G, -)$ preserves regular epimorphisms.
\end{enumerate}

\begin{theorem}[Lawvere \cite{L}, Thm. 2.1]\label{T:Law}
A category is equivalent to a variety of finitary algebras iff it has

\begin{enumerate}
\item[(1)] coequalizers and finite limits,
\item[(2)] effective equivalence relations (i.e. every equivalence relation is a kernel pair of its coequalizer), and
\item[(3)] a regular generator which is an  abstractly  finite regular projective.
\end{enumerate}
\end{theorem}

\begin{remark}\label{R:Law}
 `Coequalizers' in Condition (1) are missing in \cite{L}. This seems to be just a typo: at the end of the proof of Theorem 2.1 Lawvere forms a coequalizer $\overset{=}{r}$ of a parallel pair without commenting why it exists.
\end{remark}

There is another important property of $G$ related to \textit{reflexive coequalizers}. A pair $u$, $v\colon X\to Y$ is called reflexive if it consists of split epimorphisms with a joint splitting $d\colon Y\to X$ ($u\cdot d = v\cdot d=\id$).
A reflexive coequalizer is a coequalizer of a reflexive pair. Whereas in varieties coequalizers are not $\Set$-based in general, reflexive coequalizers are, see \cite{AR}. It then follows that $G$ has the following property introduced by Pedicchio and Wood \cite{PW}:

\setcounter{theorem}{2}

\begin{defi}\label{D:eff} %\label{de1.3}
An object is called an \textit{effective projective\/} if its  hom-functor preserves reflexive coequalizers.
\end{defi}

 Effective equivalence relations can be deleted from the above theorem, provided that in Condition (3)  we replace regular projective by effective projective.
This was observed by Pedicchio and Wood \cite{PW}. In \cite{A} a full proof of the following  modified theorem is presented:

\begin{theorem}\label{th1.6}
A category is equivalent to a variety of finitary 
algebras iff it has 
\begin{enumerate}
\item[(1)] reflexive coequalizers and kernel pairs, and
\item[(2)] a strong generator which is an abstractly finite effective projective.
\end{enumerate}
\end{theorem}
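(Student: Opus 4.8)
The plan is to prove the two implications separately, the forward one being a routine verification and the converse carrying all the work. For the \emph{only if} direction I would take $\ck$ to be a variety and $G=F1$ the free algebra on one generator. Then $G$ is a regular, hence strong, generator, since every algebra is a regular quotient of a free one $\coprod_M G=FM$. It is abstractly finite because a homomorphism $G\to\coprod_M G$ is determined by the image of the single generator, which, by finitarity of the signature, lies in the subalgebra generated by finitely many coproduct injections. Finally, reflexive coequalizers in $\ck$ are formed on the level of underlying sets (see \cite{AR}), so $\ck(G,-)$, naturally isomorphic to the forgetful functor, preserves them; thus $G$ is an effective projective. As varieties are complete and cocomplete, (1) holds.

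For the \emph{if} direction, assume $\ck$ has (1) and a strong generator $G$ that is an abstractly finite effective projective. I would let $\ca\hookrightarrow\ck$ be the full subcategory on the finite copowers $\coprod_n G$ ($n\in\N$), which exist by abstract finiteness, and set $\ct=\ca^{\op}$; since finite coproducts in $\ca$ become finite products in $\ct$, this is a Lawvere theory. The comparison functor would be
\[
E\colon\ck\to\Mod\ct,\qquad EX=\ck(-,X)|_{\ca^{\op}},
\]
which lands in models because $\ck(\coprod_n G,X)\cong\ck(G,X)^n$ shows that $EX$ sends finite coproducts in $\ca$ to products. The goal is then to show that $E$ is an equivalence.

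Fullness and faithfulness rest on the \emph{canonical presentation}: because $G$ is a strong generator and $\ck$ has reflexive coequalizers and kernel pairs, every object $X$ admits a presentation as a reflexive coequalizer
\[
\coprod_{q} G \;\rightrightarrows\; \coprod_{p} G \;\longrightarrow\; X,
\]
the right-hand map being the canonical strong epimorphism $\coprod_{p\colon G\to X}G\to X$ and the left-hand pair encoding its kernel pair through the generator. Faithfulness of $E$ is immediate from $G$ being a generator, while for fullness I would argue that a model morphism $EX\to EY$, being determined by its value at $G$, transports the presentation of $X$ into a cocone over $Y$ and hence factors through a unique morphism $X\to Y$.

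For essential surjectivity the key lemma is that reflexive coequalizers commute with finite products in $\Set$; consequently reflexive coequalizers in $\Mod\ct$ are formed pointwise, i.e. computed in $\Set$ at each object of $\ct$. Given a model $M$, I would present it as a reflexive coequalizer of representable (free) models, each of the form $E(\coprod_n G)$, realize the corresponding reflexive pair inside $\ca$, and take its coequalizer $X$ in $\ck$, which exists by (1). Applying $E$ and using that each $\coprod_n G$ is again an effective projective, so that $\ck(\coprod_n G,-)$ preserves this reflexive coequalizer, together with the pointwise computation of reflexive coequalizers in $\Mod\ct$, should yield $EX\cong M$. The hard part will be exactly this last matching: one must show that $E$ \emph{preserves} the reflexive coequalizer presenting $X$ and that every model is compatibly presentable by free models through a reflexive coequalizer. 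Both hinge on reconciling, through $E$, two computations of the "same" colimit — one governed by effective projectivity of the copowers of $G$ in $\ck$, the other by the $\Set$-based nature of reflexive coequalizers in $\Mod\ct$ — and this is the technical heart of the argument, the remaining verifications (well-definedness, faithfulness, preservation of products) being formal.
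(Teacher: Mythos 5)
Your forward direction is fine, but the converse has two genuine gaps. The most telling one: abstract finiteness is never actually used. Its defining property --- that every morphism $G\to\coprod_M G$ factors through a finite subcopower --- appears nowhere in your outline, yet the theorem is false without it (it is exactly what makes the resulting theory finitary). It is consumed at the point you defer as the ``technical heart'': to get essential surjectivity you must know that $E$ carries the infinite copowers $\coprod_M G$ to the \emph{free} models of $\ct$, i.e.\ that $\ck(G,-)$ sends the filtered diagram of finite subcopowers of $\coprod_M G$ to a colimit diagram in $\Set$; effective projectivity gives preservation of reflexive coequalizers and says nothing about filtered colimits, so only abstract finiteness can supply this. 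Relatedly, your claim that every model $M$ can be presented as a reflexive coequalizer of \emph{representables} $E(\coprod_n G)$, $n\in\N$, is wrong as stated: the canonical presentation of a model of a Lawvere theory is by free models on arbitrary (possibly infinite) sets, and only the finitely generated ones are representable. Hence the reflexive pair does not lie in $\ca$ and cannot be ``realized inside $\ca$''; one must interleave filtered colimits with reflexive coequalizers (i.e.\ work with sifted colimits), realizing the free model on $M$ as $E(\coprod_M G)$ via the factorization property above. Finally, the canonical presentation of an object $X$ of $\ck$ --- that $c_X\colon\coprod_{\ck(G,X)}G\to X$ is the coequalizer of its kernel pair --- is asserted but is itself a nontrivial lemma: one forms the kernel pair, coequalizes the resulting reflexive pair, shows the comparison map is monic using effective projectivity of $G$, and then invertible using that $G$ is a strong generator. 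This is precisely the analogue of step (1) of the proof of Theorem~\ref{P1} and cannot be waved through.

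For comparison: this paper does not reprove the statement (it refers to \cite{A}), and its ordered analogue, Theorem~\ref{T:main}, follows a different route from yours --- first density of the full subcategory $\ca$ of finite copowers of $G$ (Theorem~\ref{P1}, where strong generation, projectivity and abstract finiteness each do identifiable work), then a full embedding into $\Sigma\mbox{-}\Alg$ with $\Sigma_n=\ck_0(G,n\otimes G)$, concluded by a Birkhoff-type closure argument. Your Lawvere-theory comparison $E\colon\ck\to\Mod\ct$ is a legitimate alternative (it is the spirit of Theorem~\ref{T:sif} and \cite{AR}), but to make it work you must prove the density/canonical-presentation lemma and route essential surjectivity through sifted colimits as indicated, with abstract finiteness doing explicit duty in both.
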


A pioneering result of Lawvere's thesis was a characterization of varieties as categories of models of algebraic theories.
Let us recall this. 

\begin{nota}\label{N:N}%1.7
Denote by
$$
\cn
$$
the  full subcategory of $\Set$ on all natural numbers  $n=\{0, \dots , n-1\}$. We have a canonical strict structure  of finite coproducts in $\cn$: given natural natural numbers $n$ and $k$ we equip $n+k$ with the injections $n\to n+k$ given by $i\mapsto i$, and $k\mapsto n+k$ given by $j\mapsto n+j$.
\end{nota}

\begin{defi}\label{D:th} %1.7
 An \textit{algebraic theory} is a small category $\ct$  whose objects are natural numbers and having 
 finite products together  with a functor $I\colon \cn^{\op} \to \ct$ which is identity on objects and strictly preserves finite products.

 The category $\Mod \ct$ of \textit{models} has as objects functors $A\colon \ct \to \Set$ preserving finite products, and as morphisms natural transformations.
  \end{defi}
 
 \begin{theorem}[Lawvere]\label{T:th} %1.8
  Varieties are, up to equivalence, precisely the categories of models of algebraic theories.
 \end{theorem}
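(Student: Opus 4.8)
The plan is to prove the two inclusions separately, the common tool being the algebraic theory built from the finitely generated free algebras of a variety.

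First I would show that every variety $\ck$ of $\Sigma$-algebras is equivalent to a category of models. Let $Fn$ denote the free algebra on $n$ generators, so that $F1=G$ and $Fn=\coprod_n G$. I take $\ct$ to be the dual of the full subcategory of $\ck$ spanned by the objects $Fn$, $n\in\N$, identifying $n$ with $Fn$. Since the free functor is a left adjoint it preserves coproducts, whence $Fn+Fm\cong F(n+m)$ in $\ck$; dually $n+m$ is a product of $n$ and $m$ in $\ct$, so $\ct$ has finite products and the functor $I\colon\cn^{\op}\to\ct$ sending $n$ to $Fn$ is identity on objects and strictly preserves them. This exhibits $\ct$ as an algebraic theory. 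I would then define the comparison functor $E\colon\ck\to\Mod\,\ct$ by $E(A)=\ck(-,A)$ restricted to the free algebras, i.e. $E(A)(n)=\ck(Fn,A)$; because $\ck(F(n+m),A)\cong\ck(Fn,A)\times\ck(Fm,A)$ this lands among the product-preserving functors.

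I would prove $E$ is an equivalence in two steps. For full faithfulness, a model morphism $\alpha\colon E(A)\to E(B)$ is determined by its component at $1$, a map $\ck(G,A)\to\ck(G,B)$ of underlying sets; naturality of $\alpha$ with respect to the morphisms $F1\to Fn$ (the $n$-ary operations) forces this map to preserve all operations, hence to be a homomorphism, and product preservation recovers $\alpha$ from it, giving $\ck(A,B)\cong\Mod\,\ct(E(A),E(B))$. For essential surjectivity, given a model $M$ I endow the set $M(1)$ with the operation $M(\omega)\colon M(1)^n\to M(1)$ for each $\omega\in\ct(n,1)=\ck(F1,Fn)$; since any equation valid in $\ck$ is, by construction of $\ct$, already an identity in $\ct(n,1)$, the resulting $\Sigma$-algebra satisfies all the equations defining $\ck$ and so lies in $\ck$, and one checks that $E$ of it is naturally isomorphic to $M$. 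For the converse direction I would, starting from an arbitrary algebraic theory $\ct$, read off a presentation: put $\Sigma_n=\ct(n,1)$ and let the equations $\ce$ be the kernel of the unique identity-on-operations morphism from the free theory of $\Sigma$-terms into $\ct$. The same two assignments as above are then mutually inverse and natural, giving $\Mod\,\ct\simeq\Alg(\Sigma,\ce)$, a variety.

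I expect the main obstacle to be the equivalence $E$ in the first direction, and within it the essential surjectivity: one must verify that the abstractly reconstructed algebra genuinely satisfies every defining equation of $\ck$ and that $E$ of it is naturally the given model. The crucial point that makes this work is that $\ct=\ck^{\op}_{fg}$ simultaneously encodes the operations, as the morphisms with codomain $1$, and the equations, as the identifications forced among such morphisms inside the free algebras. Checking that this double role is faithful -- that no operations or equations are lost or added in passing between $\ck$ and $\ct$ -- is where the real content of Lawvere's theorem lies.
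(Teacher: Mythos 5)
Your proposal is correct, and it is precisely the classical argument from Lawvere's dissertation: take $\ct$ to be the (rigidified) dual of the full subcategory of free algebras on finitely many generators, compare via $E(A)=\ck(F(-),A)$, and for the converse read off a presentation with $\Sigma_n=\ct(n,1)$ and equations given by the kernel of the evaluation morphism from the term theory. The paper states this theorem without proof, simply citing \cite{L} (its Remark~\ref{R2.14} records the same construction, enlarged by retracts of free finitely generated algebras, which matters for the $\Sind$-characterization but not here), so there is nothing to contrast beyond noting that your sketch correctly isolates the genuinely delicate points: choosing canonical product cones so that $I$ \emph{strictly} preserves finite products, and the induction on terms showing that a model's action on morphisms agrees with term interpretation, which underlies both essential surjectivity and the claim that the reconstructed algebra satisfies the defining (in)equations.
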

 
 Whereas limits in a variety $\cv$ are computed on the  level of $\Set$ (indeed, they are preserved by the forgetful functor $U\colon \cv\to \Set$), colimits in general are not. However, $U$ preserves directed colimits and (as remarked above) reflexive coequalizers. 
 These two types of colimits are generalized as follows:
 
 \begin{defi}[{\cite{AR}}]\label{D:sif} %1.9
  A small category $\cd$ is called \textit{sifted} if  for all  diagrams $D\colon \cd \to \Set$ in $\Set$ colimits commute with finite products.
  
  A \textit{sifted colimit} in a category is a colimit of a diagram whose domain is a sifted category.
  \end{defi}

  \begin{example}\label{R:sif} % \label{E2.11}%1.10
  Both directed colimits and reflexive coequalizers are sifted colimits. And these two types are, in a way, exhaustive. For example, if $\ck$ and $\cl$ are categories with colimits, then a functor $F\colon \ck \to \cl$ preserves sifted  colimits iff it preserves filtered colimits and reflexive coequalizers, see \cite{AR}.
  \end{example}
  
  \begin{nota}%1.11
  For every category $\ck$ denote by $\Sind \ck$ the free completion under sifted colimits: given a category $\cl$ with sifted colimits, every functor $F\colon \ck \to \cl$ has an extension $F'\colon \Sind \ck \to \cl$ preserving sifted colimits, unique up to a natural isomorphism.
\end{nota}

\begin{theorem}[{\cite{AR}}]\label{T:sif} %1.12
Varieties are up to    equivalence precisely the categories $\Sind\ct^{\op}$ for algebraic theories $\ct$.
\end{theorem}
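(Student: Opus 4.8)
The plan is to establish the equivalence $\Sind\ct^{\op}\simeq\Mod\ct$ and then to quote Lawvere's Theorem~\ref{T:th}, by which the categories $\Mod\ct$ are, up to equivalence, exactly the varieties. Write $\ca=\ct^{\op}$; since $\ct$ has finite products, $\ca$ has finite coproducts. I realize $\Sind\ca$ concretely as the closure of the representable functors under sifted colimits inside the presheaf category $[\ca^{\op},\Set]=[\ct,\Set]$; that this closure, with its inclusion, is the free completion of $\ca$ under sifted colimits is the standard construction of $\Sind$, which I would recall or cite. Under the Yoneda embedding an object $a$ of $\ca$ is sent to the representable $\ca(-,a)=\ct(a,-)\colon\ct\to\Set$. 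Recall finally that a model is precisely a finite-product-preserving functor $\ct\to\Set$, so that $\Mod\ct$ is the full subcategory of $[\ct,\Set]$ on the finite-product-preserving functors.

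For the inclusion $\Sind\ca\subseteq\Mod\ct$ I would argue as follows. Each representable $\ca(-,a)=\ct(a,-)$ preserves all limits, in particular finite products, hence is a model. By the very Definition~\ref{D:sif} of siftedness, sifted colimits commute with finite products in $\Set$; as sifted colimits in $[\ct,\Set]$ are formed pointwise, the finite-product-preserving functors are closed under them. Consequently the closure of the representables under sifted colimits remains inside $\Mod\ct$.

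The substantive step is the reverse inclusion: every model is a sifted colimit of representables. Given a model $P\colon\ct\to\Set$, regarded as a presheaf on $\ca$, I would use its canonical presentation $P\cong\colim$ as a colimit of representables indexed by the category of elements $\operatorname{El}P$, and show that $\operatorname{El}P$ is sifted. It is nonempty because $P$ preserves the terminal object of $\ct$ (the empty product), giving a singleton $P0$ over the initial object $0$ of $\ca$. For the diagonal condition, take elements $(a_1,x_1)$ and $(a_2,x_2)$; since $P$ carries the coproduct $a_1\sqcup a_2$ of $\ca$ to the product $Pa_1\times Pa_2$, there is a \emph{unique} $z\in P(a_1\sqcup a_2)$ lying over $(x_1,x_2)$, and the coproduct injections yield a cospan $(a_1,x_1)\to(a_1\sqcup a_2,z)\leftarrow(a_2,x_2)$. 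The universal property of $a_1\sqcup a_2$, together with the same product decomposition, shows that this cospan maps canonically to every competing cospan on $(a_1,x_1)$ and $(a_2,x_2)$; hence that category of cospans is connected. Therefore $\operatorname{El}P$ is sifted and $P$ is a sifted colimit of representables, so $\Mod\ct\subseteq\Sind\ca$.

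Combining the two inclusions shows that $\Sind\ct^{\op}$ and $\Mod\ct$ coincide as full subcategories of $[\ct,\Set]$, so the canonical comparison is an equivalence (indeed the identity on objects). Invoking Theorem~\ref{T:th} then finishes the proof. I expect the main obstacle to be the substantive step, namely verifying that the category of elements of a finite-product-preserving functor is sifted; the delicate point there is the \emph{uniqueness} of $z$, which is what forces the cospan category to be connected rather than merely nonempty. A secondary point requiring care is the justification that the closure of the representables under sifted colimits genuinely carries the universal property defining $\Sind$.
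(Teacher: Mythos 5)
Your proposal is correct, but note that the paper does not actually prove Theorem~\ref{T:sif}: it quotes it from \cite{AR}, with Remark~\ref{R2.14} sketching the route taken there (perfectly presentable objects of a variety are the retracts of finitely generated free algebras, and a representative full subcategory of them is an algebraic theory). Your argument --- realize $\Sind\ct^{\op}$ as the closure of the representables under sifted colimits in $[\ct,\Set]$, observe that finite-product-preserving functors are closed under pointwise sifted colimits by Definition~\ref{D:sif}, and show every model is a sifted colimit of representables because $\operatorname{El}P$ is sifted --- is essentially the original argument of \cite{AR}, and your treatment of the canonical cospan through $(a_1\sqcup a_2,z)$, with the uniqueness of $z$ securing the element condition on the comparison morphism, is exactly the delicate point. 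Two steps you correctly flag as needing citation deserve emphasis: that the sifted-colimit closure of the representables really carries the universal property of $\Sind$, and --- which you use silently in ``Therefore $\operatorname{El}P$ is sifted'' --- that nonemptiness plus connectedness of all cospan categories implies siftedness in the commutation sense of Definition~\ref{D:sif}; this is the nontrivial characterization of sifted categories via finality of the diagonal, not the definition itself. It is instructive to compare with the paper's proof of the \emph{enriched} analogue (Theorem~\ref{perfect2} and Theorem~\ref{th5.14}), which takes a genuinely different route: instead of analyzing categories of elements, it characterizes perfectly presentable algebras (Proposition~\ref{perfect1}), exhibits every algebra as a split coequalizer of free algebras via the canonical presentation, free algebras as filtered colimits of finite free ones, and invokes the recognition criterion of \cite[Proposition 4.2]{KS}. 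Your approach buys a self-contained, presheaf-level identification $\Sind\ct^{\op}=\Mod\ct$ in $\Set$; the paper's route avoids the elements-are-sifted analysis, which is what lets it survive enrichment over $\Pos$, where siftedness concerns weights rather than index categories.
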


\begin{remark}\label{R2.14}
In \cite{AR} an object $A$ of a category $\ck$ is called \textit{ perfectly presentable} if $\ck(A, -)$ preserves sifted colimits.  If $\ck$ is a variety, these are precisely the retracts of free finitely generated algebras of $\ck$. Moreover, a full subcategory $\ct$ of $\ck$ representing all perfectly presentable objects (up to isomorphism) is an algebraic theory with $\Mod \ct^{\op}$ equivalent to $\ck$.
\end{remark} 

\section{Generators}\label{sec2} 

\begin{assump}\label{as2.1}
From now on we work with categories enriched over the (cartesian closed) category $\Pos$. Thus a category $\ck$ is understood to have partially ordered hom-sets and composition is monotone. Also `functor' means automatically an enriched functor, i.e., one monotone on hom-sets. Natural transformations in the enriched sense are just the ordinary ones. 

Every set is considered as the (discretely ordered) poset.
\end{assump}

\begin{remark}\label{re2.2}
  Recall the concept of a \textit{coinserter} of a parallel (ordered) pair of morphisms
$u_0$, $u_1\colon U\to X$: it is a morphism $f\colon X\to Y$ universal w.r.t. $fu_0 \leq fu_1$. That is,  given $f'\colon X\to Y'$ with $f'u_0 \leq f'u_1$, then (a) there exists $g\colon Y\to Y'$ with $f' =g f$ and (b) for every $\bar{g} \colon Y\to Y'$ from $gf \leq \bar g f$ it follows that $g\leq \bar g$.

%(2) Further recall the concept of a \textit{tensor} $P\otimes K$ of an object $K\in \ck$ by a poset $P$. %This is an object of $\ck$ for which we have an isomorphism

%\begin{equation}\label{e2.1} 
%\ck (P\otimes K, X) \simeq \Pos \big(P, \ck(K, X)\big)
%\end{equation}
%natural in $X\in \ck$.
\end{remark}

\begin{defi}\label{3.3}
A morphism is called a \textit{subregular epimorphism\/} if it is a coinserter of a parallel pair.
\end{defi}

\begin{example}\label{E:subreg}
(1)
 Every subregular epimorphism $f\colon A\to B$ is an epimorphism -- indeed, it has the stronger property that for parallel pairs $u_1, u_2 \colon B\to C$ we have $u_1\leq u_2$ iff $u_1 f\leq u_2 f$.
 
 If a category  has finite copowers, every regular epimorphism $f\colon A\to B$ is subregular: if $f$ is the coequalizer of $u, v\colon C\to A$, then it is the coinserter of $[u,v]$, $[v, u]\colon C+C\to A$.

(2) In $\Pos$
 subregular epimorphisms are precisely the epimorphisms, i.e., the surjective morphisms. See Proposition \ref{P:surj} for a more general statement.
 
 (3) If If $e=qp$ is a subregular epimorphism, then so is $q$. Indeed, let $u_0$, $u_1$ be a parallel pair with coinserter $e$. It is easy to verify that $q$ is a coinserter of $pu_0$ and $pu_1$.
\end{example}

%%%%%%%%%%%%%%%%%
 \begin{defi}\label{D3.4}
 By a \textit{subkernel pair} of a morphism $f\colon X\to Y$ is meant a parallel pair $u_0$, $u_1 \colon U\to X$  universal w.r.t. $fu_0 \leq fu_1$. That is:
 \begin{enumerate}
 \item[(1)] every pair $v_0$, $v_1 \colon V \to X$ with $fv_0 \leq fv_1$ factorizes as $v_i = u_i \cdot k$ for some $k\colon V\to U$, and
 \item[(2)] whenever $\bar k \colon V\to U$ fulfils $u_i k\leq u_i\bar k$ for $i=0,1$, then $k\leq \bar k$.
 \end{enumerate}
 \end{defi}
 
 \begin{example}\label{E:subker}%{E3.5} 
(1) In $\Pos$ the subkernel pair of $f\colon X\to Y$ is the pair of projections of the subposet $U$ of $X\times X$ on all $(x_0, x_1)$ with $f(x_0)\leq f(x_1)$.

(2) Every subregular epimorphism $f\colon X\to Y$ is the coinserter of its subkernel pair. Indeed, let $f$ be the coinserter of $v_0$, $v_1\colon V\to X$, and let $k$ be the factorization above. If $g\colon X\to Z$ fulfils $gu_0\leq gu_1$,  then $gv_0=gu_0k \leq gu_1k = gv_1$, thus, $g$ factorizes uniquely through $f$.
\end{example}

\begin{defi}\label{D:tensor} %2.5
Let $K$ be an object of an (enriched) category $\ck$. A \textit{tensor} $P\otimes K$ for a poset $P$ is an object of $\ck$ such that for every object $X$ of $\ck$ we have an isomorphism
\begin{equation}\label{e2.1}
\Pos \big(P, \ck (K, X)\big) \simeq \ck (P\otimes K, X)
\end{equation}
in $\Pos$, natural in $X \in \ck$. We say that $K$ \textit{has tensors} if  $P\otimes K$ exists for every poset $P$.
\end{defi}

\begin{example}%3.5
(1) In $\Pos$, $P\otimes K$ is just the categorical product $P\times K$.

(2) In a variety $\ck$ of ordered algebras, let $K$ be the free algebra on one generator. Then $P\otimes K$ is the free algebra on $P$. Indeed, for every algebra $X$ in $ \cv$ the poset $\ck (K,X)$ is precisely the underlying poset $|X|$ of $X$, thus the left hand side of \eqref{e2.1} consist of all monotone functions from $P$ to $|X|$. And they naturally correspond to homomorphism from $P\otimes K$ to $X$.
\end{example}
%%%%%%%
\begin{nota}\label{no2.5} %2.7
The isomorphism \eqref{e2.1} is denoted by
$$
\xymatrix@R=.21pc{
&P \ar[r]^{f\quad } & \ck (K, X)&\\
\ar@{-}[rrr]&&&\\
&P\otimes K \ar[r]_{\hat f} &X&
}
$$
Example: 
given an object $K$, then for each object $X$ the identity of $\ck (K,X)$ yields the \textit{canonical morphism}
$$
c_X =\widehat{\id} \colon \ck(K,X) \otimes K\to X\,.
$$ 
 The following is an `inverse' example: we define
$$
\eta_P\colon P\to \ck(K, P\otimes K) \quad \mbox{by}\quad \widehat{\eta}_P =\id_{P\otimes K}.
$$
\end{nota}

\begin{example}\label{ex2.7}
For a natural number $n$, considered as the discrete poset $\{0, \dots , n-1\}$, we have a copower
$$
n\otimes K=\coprod_{n} K\,.
$$
We denote it by
$$
n.K\,.
$$

Given $n$ morphisms $f_i\in \ck (K,X)$, the corresponding map $f\colon n\to \ck(K,X)$ yields
$$
\hat f = \widehat{[f_i]} \colon {n}. K\to X\,.
$$
\end{example}

\begin{prop}[{\cite{Bo}, Prop. 6.5.5 and 6.5.6}]\label{P} %2.8
Let $G$ be an object with  tensors. Then the hom-functor
$$
\ck (G, -)\colon \ck \to \Pos
$$
has the following  enriched  left adjoint 
$$
-\otimes G\colon \Pos \to \ck\,.
$$
It assigns to every monotone map $p\colon P\to Q$ the morphism $p\otimes G$ corresponding to 
$ P \xrightarrow{\ p\ } Q \xrightarrow{\, \eta_Q\ } \ck (X, Q\otimes X)$:
$$
p\otimes G = \widehat{\eta_Q\cdot p} : P\otimes X\to Q\otimes X\,.
$$
\end{prop}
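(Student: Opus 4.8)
The plan is to observe that the defining isomorphism of tensors from Definition~\ref{D:tensor},
\[
\Phi_{P,X}\colon \Pos\big(P,\ck(G,X)\big)\xrightarrow{\ \sim\ }\ck(P\otimes G,X),
\]
is already an isomorphism in $\Pos$ natural in $X$; all that remains is to make the left-hand side functorial in $P$ and to verify naturality in $P$. Once this is done, $\Phi$ is an order-isomorphism natural in both variables, which is precisely an enriched adjunction $-\otimes G\dashv\ck(G,-)$. Every verification below will be driven by naturality in $X$ together with the defining property $\widehat{\eta_P}=\id_{P\otimes G}$ from Notation~\ref{no2.5}.

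First I would spell out naturality in $X$. Writing $\hat f\colon P\otimes G\to X$ for the transpose of $f\colon P\to\ck(G,X)$, it says that $\widehat{\ck(G,g)\cdot f}=g\cdot\hat f$ for every $g\colon X\to X'$. Taking $f=\eta_P$, so that $\hat f=\id_{P\otimes G}$ by the defining property of $\eta_P$, yields an explicit formula for the inverse transpose,
\[
\ck(G,h)\cdot\eta_P=\check h\qquad(h\colon P\otimes G\to X),
\]
where $\check{(-)}$ denotes the inverse of $\widehat{(-)}$. This single identity is all the machinery the proof needs.

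Next I would put the functor structure on $-\otimes G$ by the formula in the statement, $p\otimes G=\widehat{\eta_Q\cdot p}$ for $p\colon P\to Q$. Preservation of identities is immediate, $\id_P\otimes G=\widehat{\eta_P}=\id_{P\otimes G}$, and for $p\colon P\to Q$, $q\colon Q\to R$ naturality in $X$ gives
\[
(q\otimes G)\cdot(p\otimes G)=\widehat{\ck(G,q\otimes G)\cdot\eta_Q\cdot p}=\widehat{\eta_R\cdot q\cdot p}=(q\cdot p)\otimes G,
\]
using $\ck(G,q\otimes G)\cdot\eta_Q=\check{(q\otimes G)}=\eta_R\cdot q$. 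Monotonicity of $p\mapsto p\otimes G$ follows because $\widehat{(-)}$ is an isomorphism in $\Pos$ and post-composition with $\eta_Q$ is monotone; hence $-\otimes G$ is an enriched functor.

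The last step is naturality of $\Phi$ in $P$: for $p\colon P'\to P$ and $f\colon P\to\ck(G,X)$ one must check $\widehat{f\cdot p}=\hat f\cdot(p\otimes G)$, and again naturality in $X$ settles it,
\[
\hat f\cdot(p\otimes G)=\hat f\cdot\widehat{\eta_P\cdot p}=\widehat{\ck(G,\hat f)\cdot\eta_P\cdot p}=\widehat{f\cdot p},
\]
the final equality being $\ck(G,\hat f)\cdot\eta_P=\check{\hat f}=f$. I expect no genuine obstacle here: all three verifications collapse to the one identity for $\check{(-)}$, so the difficulty is purely in keeping the transpose bookkeeping straight, consistent with this being a standard fact (\cite{Bo}).
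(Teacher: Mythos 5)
Your proof is correct and complete. Note, though, that the paper contains no proof of this proposition at all: it is imported from Borceux (\cite{Bo}, Propositions 6.5.5 and 6.5.6), so the only comparison available is with the standard argument that citation stands for --- and yours is exactly that argument, namely the parametrized-representability step that upgrades a family of representations $\Pos\big(P,\ck(G,X)\big)\cong\ck(P\otimes G,X)$, given as natural in $X$ only, to an enriched functor $-\otimes G$ together with naturality in $P$. Your one workhorse identity $\ck(G,h)\cdot\eta_P=\check h$ is precisely the paper's Remark~\ref{R}(1) read through the inverse transpose, and your naturality-in-$P$ computation $\widehat{f\cdot p}=\hat f\cdot(p\otimes G)$ specializes, at $f=\id_{\ck(G,X)}$, to the paper's Remark~\ref{R}(3); so your argument in effect re-derives the auxiliary facts the paper states after the proposition rather than before it. Two minor points: you silently (and correctly) repaired the typos in the statement as printed --- $\ck(X,Q\otimes X)$ should read $\ck(G,Q\otimes G)$ and $P\otimes X\to Q\otimes X$ should be $P\otimes G\to Q\otimes G$ --- and your closing claim that an order-isomorphism natural in both variables ``is precisely an enriched adjunction'' deserves one explicit sentence: for $\Pos$-enrichment, enriched naturality of a transformation between locally monotone functors reduces to ordinary naturality, so your verifications (monotone components plus ordinary naturality in $P$ and $X$) do suffice; this is a presentational remark, not a gap.
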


\begin{remark}\label{R} %2.8
(1) The naturality of $f \mapsto \hat f$ in Notation~\ref{no2.5} means that this map is monotone $\big( f\leq g\colon P\to \ck (G, X)$ implies $\hat f\leq \hat g\big)$ and for every morphism $h\colon X\to X'$ we have a commutative triangle\\
{\vbox{
$
\xymatrix{
& P\otimes G \ar[ld]_{\hat f} \ar[rd]^{\hat f_h} & \\
X \ar[rr]_{h} && X'
}$ \\
\indent  \vskip -1.5cm \qquad \hskip 3.5cm where\quad $f_h \equiv P\xrightarrow{\ f\ } \ck(G,X) \xrightarrow{\ h\cdot (-)\ } \ck(G, X')$
}}
\vskip 1cm
{\vbox{
(2) For every monotone map $p\colon P\to Q$  in $\Pos$ the following implication holds:
\\
\indent\qquad \quad
$
\xymatrix@C=.5pc{
P \ar[rd]_{f} \ar[rr]^{p} \ar@{}[drr]|{\circlearrowright}&& Q \ar[ld]^{g}\\
& \ck(G,Y) &
}
$
\quad\vskip-1cm \hskip 5cm$\Rightarrow$
\quad \vskip-1.4cm\hskip5.6cm
$\xymatrix@C=.5pc{
P\otimes G \ar[dr]_{\hat f} \ar[rr]^{p\otimes G} \ar@{}[drr]|{\circlearrowright}&& Q\otimes G\ar[dl]^{\hat g}\\
& Y &
}
$
\\[4pt]
This follows from (1) since $p\otimes G= \widehat{\eta_Q \cdot p}$ and for $f= \eta_Q\cdot p$ we get $f_{\hat g} = g\cdot p=f$.
}}
\vskip 1mm
(3) The canonical morphism makes for every monotone map $p\colon P\to \ck(G,X)$ the following  triangle commutative
$$
\xymatrix@C=.3pc{
& P\otimes G \ar[ld]_{p\otimes G} \ar[rd]^{\hat p} & \\
\ck(G,X) \otimes G \ar[rr]_{\quad c_X} && X
}$$ 
This follows from (1): for  $f= \eta_{\ck(G,X)}\cdot p$ we get $\hat f= p\otimes G$ by Proposition \ref{P}. Since $\hat c_X =\id$, we have $f_{c_X}=p$.

\end{remark}

\begin{remark}\label{re2.9}
(1) Recall from \cite{K} the concept of a \textit{weighted colimit} in $\ck$. Given a diagram $D\colon \cd\to \ck$ and a weight $W\colon \cd^{\operatorname{op}} \to \Pos$, a weighted colimit is an object $C$ of $\ck$  with an isomorphism
$$
\ck (C, X) \cong [ \cd^{\op}, \ck] \big(W, \ck (D-, X)\big)
$$
natural in $X \in \ck$. The object $C$ is also denoted by $\colim_W D$.

Example: tensor $P\otimes K$ is a colimit of the diagram $D\colon 1\to \ck$ representing $K$ weighted by $W\colon 1\to \Pos$ representing $P$.

\vskip 1mm
(2) Another example: a coinserter  of $u_0$, $u_1\colon U\to X$ is the colimit of the diagram where $\cd$ is given by a  parallel pair $(\delta_0, \delta_1)$ to which $D$ assigns $(u_0, u_1)$,  and   $W$ assigns  the following monotone  maps
\\
\indent
\hskip 5cm
$\boxed{\xymatrix@R=3.2pc@C=3pc{
{}\ar@{{*}-{*}}[d]\\
{}}}
$
\\
\indent \vskip-2.1cm \hskip 53mm
$\xymatrix@R=1pc@C=5pc{
{}  &\\
{}& \ar[lu]_{W\delta_1} \ar[ld]^{W\delta_0}\\
{}&
}
$
\vskip-1.5cm \hskip7.7cm
$\boxed{\xymatrix@R=0.2pc{
{}\\
{}\bullet\\
{}
}
}
$
 \vskip 5mm
 (3)  Every left adjoint preserves weighted colimits (\cite{Bo}, Prop. 6.7.2).
 
 \vskip 2mm
 (4) The dual concept is a \textit{weighted limit}. Given a diagram $D\colon \cd \to \ck$ and a weight $W\colon \cd \to \Pos$, a weighted limit is an object $L$ with an isomorphism
 $$
\ck(X,L) \simeq [\cd, K] \big(W, \ck(X, D-)\big)
$$
natural in $X$.
 
\vskip 1mm
(5)
Example: the subkernel pair of $f\colon X\to Y$ (Definition~\ref{D3.4}) is the weighted limit where $\cd$ is a cospan $\delta_0$, $\delta_1$ to which
$D$ assigns $D\delta_0 = D\delta_1=f$ and 
 $W\colon \cd \to \Pos$ assigns the embeddings of $\{0\}$ and $\{1\}$ to the chain $0<1$, respectively:
\\
\begin{center}
$
\xymatrix@C=1.5pc@R=0.61pc{
&&&&&& &\{1\}\ar@{_{(}->}[d]&\\
&& {}\ar[dd]^{\delta_1}&& && &\\
&&&{}\ar[rr]^{W}&&&\\
{}\ar[rr]_{\delta_0}&&&&&&\{0\}\hookrightarrow&
}
$
\\
\indent\quad\vskip-1.5cm \hskip 6.3cm
$\boxed{\xymatrix@C=2pc@R=1.52pc{
{}\ar@{{*}-{*}}[d]^>0 ^<1\\
{}}}
$
\end{center}

\vskip 6mm

  \end{remark}

%\begin{prop}[{\cite{Bo}, Prop. 6.5.5--6}]\label{prop2.10}
%If $G$ is an object with tensors, then its hom-functor has a left adjoint
%$$
%-\otimes G\colon \Pos \to \ck\,.
%$$
%It assigns to a monotone map $p\colon P\to Q$ the morphism corresponding to %$\eta_Q\cdot p \colon P\to \ck(K, Q\otimes K)$, shortly:
%$$
% p\otimes G = \widehat{\eta_Q \cdot p} \colon \cp \otimes G\to Q\otimes G\,.
%$$
%\end{prop}

\begin{example}\label{R2}
(1) Every poset $P$ is a \textit{canonical coinserter} of  the projections $\pi_0$, $\pi_1\colon P^{(2)} \to |P|$, where $|P|$ is a discrete poset underlying $P$ and $P^{(2)}$ is the discrete poset of all comparable pairs in $P\times P$. More precisely, the following is a coinserter:
$$
\xymatrix@1{ 
P^{(2)} \ar@<1ex>[r]^{\pi_1}
\ar @<-1ex>[r]_{\pi_0} \ & \ \ |P|\  \ar[r]^{k_p} &\  P
}
$$
where $k_p$ is carried by $\id_{|P|}$.

(2) Consequently, for every object $G$ we have the corresponding coinserter:
$$
\xymatrix@1@C=3pc{ 
P^{(2)} \otimes G\ar@<1ex>[r]^{\pi_1\otimes G}
\ar @<-1ex>[r]_{\pi_0\otimes G} \ & \ \ |P|\otimes G\  \ar[r]^{k_p\otimes G} &\  P\otimes G
}
$$
Indeed, $-\otimes G$ preserves coinserters since they are weighted colimits, see \ref{re2.9}(3).

\vskip 2mm
(3) A \textit{reflexive coinserter} is a coinserter of a reflexive pair $u_0$, $u_1$ (which means that $u_0$, $u_1$ are split epimorphisms with a joint splitting). Observe that the coinserters in (1) and (2) are reflexive: use the diagonal map $|P| \to P^{(2)}$.
\end{example}

%----------------------------------------
\begin{nota}\label{N:str}
(1) Given a full subcategory $\ca$ of $\ck$ (in the enriched sense: the ordering of hom-sets is inherited from $\ck$), we denote by
$$
E\colon \ck \to \Pos^{\ca^{\op}}
$$
 the functor assigning to an object $K$ the restriction  of $\ck (-, K) \colon \ck^{\op} \to \Pos$ to $\ca^{\op}$.
 
 (2) In particular, if $\ca$ consists of a single object $G$, then $\Pos^{\ca^{\op}}$ is the category of posets with a (monotone) action of the ordered monoid $\ck(G,G)^{\op}$. Morphisms are the monotone equivariant maps. Here the functor $E$ assigns to $K$ the poset $\ck(G,K)$ with  the action corresponding to $u\colon G\to G$ given by precomposition with $u$.
 \end{nota}
 
 \begin{defi}\label{D:emb}
 A morphism $f\colon X\to Y$ is  called 
 \begin{enumerate}
 \item[(a)] an \textit{embedding} if given morphisms $u_0, u_1 \colon U\to X$ we have $u_0\leq u_1$ iff $fu_0 \leq fu_1$
 \end{enumerate}
 and
 \begin{enumerate}
 \item[(b)] an \textit{extremal epimorphism} if whenever it  factorizes through an embedding  $m\colon Y_0 \to Y$, then $m$ is invertible.
 \end{enumerate}
 \end{defi}
 
 In ordinary categories an object $G$ is called a strong generator provided that every monomorphism $m\colon X\to Y$ such that 
$$
m.(-) \colon \ck(G,X)\to \ck(G,Y)
$$ 
is a bijection is invertible. In case $G$ has copowers, this is equivalent to each canonical map $c_K\colon \coprod\limits_{f\colon G\to K} G\to K$ being  an extremal epimorphism (one not factorizing through a proper subobject of $K$). Here is the enriched version:
 
 \begin{defi}[Kelly\cite{K}]\label{D:str}
 An object $G$ of $\ck$ is a \textit{strong generator} provided that the functor $E$ of  Notation \ref{N:str}(1) is conservative: a morphism  $m$ of $\ck$ is invertible  iff $Em$ is.
 \end{defi}
 
 \begin{prop}\label{P:str}
 Let $G$ be a generator with tensors. Then it is a strong generator iff all canonical maps
 $$
 c_K \colon \ck(G,K) \otimes G\to K \qquad (K\in \ck)
 $$
 are extremal epimorphisms.
 \end{prop}
 \begin{proof}
 (1) Sufficiency. We are to prove that given a morphism $h\colon K\to L$ with $Eh$ invertible, then $h$ is invertible. By assumption we have a monotone map $p\colon \ck(G,L)\to \ck(G,K)$ inverse to $h.(-)$. Since $G$ is a generator, this implies that $h$ is an embedding: given $u_0$, $u_1$ in $\ck(G, K)$ then $hu_0 \leq hu_1$ iff $u_0 \leq u_1$. Thus all we need to verify is that $c_L$ factorizes through $h$. By Remark \ref{R}(1) the composite of $\hat p\colon \ck(G,L) \otimes G\to K$ and $h$ is $\hat p_h$, where  $p_h\colon \ck(G,L) \to \ck(G,L)$ sends $u\colon G\to L$ to $h\big(p(u)\big)=u$. Thus $p_h=\id_{\ck(G,L)}$. By definition $c_L=\widehat{\id}_{\ck(G,L)}$. We conclude that the triangle below commutes
 $$
\xymatrix{
\ck(G,L) \otimes G\ar[rr]^{c_L}\ar[dr]_{\hat p} && L\\
& K \ar [ur]_{h}&
}
$$
Therefore $h$ is invertible.

(2) Necessity. If $G$ is a strong generator, and if 
$$
c_K = mh \quad \mbox{for an embedding} \quad m\colon M\to K\,,
$$
we are to prove that $m$ is invertible. By assumtion, we just need to prove that $Em =m.(-)\colon \ck(G,M) \to \ck(G,K)$ is invertible. Every morphism $f\colon G\to K$ yields a unique (monotone) map $p\colon 1\to \ck (G,K)$ with $f= \hat p$ (using $1 \otimes G=G$). The following diagram commutes:
$$
\xymatrix{
G \ar[r]^{f}
\ar[d]_{p\otimes G} & K\\
\ck (G,K) \otimes G \ar[ur]^{c_K} \ar[r]_{\hskip 6mm h} & M \ar[u]_{m}
}
$$
due to Remark \ref{R} (3). We thus obtain a mapping
$$
d\colon \ck(G,K) \to \ck (G,M)\,, \quad d(f) = h(p\otimes G)\,.
$$
The above diagram yields $Em\cdot d=\id$. Since $m$ is an embedding in $\ck$, $Em$ is one in $\Pos$. Thus $Em= d^{-1}$.
\end{proof}

For the following  definition we recall the concept of a \textit{slice category} $\ca/K$. Let $\ca$ be a subcategory of $\ck$ and $K\in  \ck$.  The objects of $\ca/K$ are the morphisms  $a\colon A\to K$ with $A\in \ca$ in $\ck$. Morphisms to another  object $a' \colon A'\to K$ are the morphisms $f\colon A \to A'$ of $\ca$ with $a=fa'$, and their ordering is inherited from $\ck$. We have the forgetful functor $D\colon \ca /K \to \ck$ given by $D(A,a) =A$. 

\begin{defi}\label{R:dense}
A full subcategory $\ca$ of $\ck$ is called  \textit{dense} if it satisfies one of the conditions below -- they are equivalent  by \cite{K}, Theorem 5.1:

1. The functor $E\colon \ck\to \Pos^{\ca^{\op}}$ of Notation \ref{N:str} is full and faithful ($f\leq g$ iff $Ef \leq Eg$ for parallel pairs $f$, $g$).

2. (a) For every object $K\in \ck$ and every cocone of $D\colon \ca/K\to \ck$ with
 codomain $L$,  there exists a morphism $k\colon K\to L$ such that the given  cocone is $(k\cdot a)_{(A,a) \in \ca/K}$, and

\vskip 1mm
(b) given a morphism $\bar k\colon K\to L$ with $k\cdot a \leq \bar k \cdot a$ for all $(A,a)$, it follows  that $k\leq \bar k$.
\end{defi}

The concept of an abstractly finite object (Section~2) has the following variant in enriched categories.

%\begin{defi} \label{D3.15}%bez oznaceni
%An object $G$ is \textit{abstractly finite} if it has tensors $P\otimes G$  ($P$ a poset) and every morphism $f\colon G\to P\otimes G$ factorizes through a finite subtensor: we have a commutative triangle
%$$
%\xymatrix@C=3pc{
%& Q\otimes G \ar[d]^{m\otimes G}\\
% G \ar@{-->}[ur] \ar[r]_{f\quad } &P\otimes G}
%  $$
%for some finite subposet $m\colon Q\hookrightarrow P$.
%\end{defi}
%%%%%%%%%%%%%

\begin{example}\label{E:abstract} 
(1) A poset is abstractly finite in $\Pos$ (see the beginning of Section 2) iff it has only finitely many connected components. Thus there exist abstractly finite posets of an arbitrarily large cardinality.

(2) A free algebra of a variety on an abstractly finite poset is abstractly finite.

(3) Every finitely generated object $G$ in a category $\ck$ is abstractly finite (but not conversely, as we have seen). Indeed, a copower $P.G$ with $P$ infinite is a directed colimit of all subcopowers $Q.G$ with $Q\hookrightarrow P$ finite and nonempty. Given two such objects $Q.G$ and $Q'.G$, the connecting morphism, in case $Q\leq Q'$, is $i.G$ for the inclusion $i\colon Q \hookrightarrow Q'$. Since $i$ is a split monomorphism in $\Set$, $i.G$ splits in $K$. Thus $\ck(G, -)$ preserves the above  directed colimit, proving  that $G$ is abstractly finite.
\end{example}

\begin{defi}\label{Def3.15} %3.16
 An object $G$ is  \textit{subregularly projective\/} if its hom-functor $\ck(G,-) \colon \ck \to \Pos$ preserves subregular epimorphisms. That is, given a subregular epimorphisms $e\colon X\to Y$, every morphism from $G$ to $Y$ factorizes through $e$.
\end{defi}

\begin{lemma}\label{L:ca}
Let $\ck$  be a category with subkernel pairs and reflexive coinserters. For every subregularly projective strong generator $G$ with copowers all canonical maps (Notation 
 \ref{no2.5}) are subregular epimorphisms.
 \end{lemma}
\begin{proof}
Let $u_0$, $u_1$ be a subkernel pair of $c_X$:
$$
\xymatrix@C=2pc@R=3pc{
U \ar@<.7ex>[r]^{u_1\qquad} \ar@<-.7ex>[r]_{u_0 \qquad} 
& \ck(G,X) \otimes G \ar[dr]_e \ar[rr]^{c_X} & &X\\
 & G \ar[ul]^{k} \ar@<-.5ex>[u]^{w_0\ }\ar@<.5ex>[u]_{\ w_1\ }
 \ar@<.7ex>[r]^{v_1\quad } \ar@<-.7ex>[r]_{v_0\quad } & Y \ar[ur]_m &
 }
 $$
  Since $u_0$, $u_1$ is obviously reflexive, we can form its coinserter $e$, and we get a unique $m\colon Y\to X$ with $c_X = me$. Our task is to prove that $m$ is invertible.
Since $c_X$ is an extremal epimorphism (Proposition \ref{P:str}) we just need to 
  prove that $m$ is an embedding. As $G$ is a generator, this amounts to showing that given $v_0$, $v_1\colon G\to Y$ with $mv_0 \leq mv_1$, it follows that $v_0\leq v_1$. Since $G$ is  projective w.r.t. subregular epimorphisms,
  there exist morphisms $w_i\colon G\to \ck(G,X) \otimes G$ with 
$v_i = e\cdot w_i$ ($i=0,1$). From $c_X u_0 \leq c_X u_1$ we conclude $c_X w_0 \leq c_X w_1$ (using that $c_X w_i= m e w_i = mv_i$). Therefore, there exists $k\colon G \to U$ with $w_i = u_i k$. This proves $v_0 \leq v_1$ as desired: $v_i = ew_i = eu_i k$.
\end{proof}
%%%%
\begin{theorem}\label{P1} %2.13 %3.18?
Let $\ck$ be a category with subkernel pairs and reflexive coinserters.
If $G$ is  an abstractly finite subregularly projective strong 
generator, then the full subcategory of all finite copowers $ n. G$,\
 $n\in \N$, is dense.
\end{theorem}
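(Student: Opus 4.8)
The plan is to exhibit every object $K$ of $\ck$ as the coinserter of the prekernel pair of its canonical morphism, and then to cut this presentation down to finite copowers of $G$ using abstract finiteness. Since $G$ is abstractly finite it has tensors, so by Proposition~\ref{P} the functor $-\otimes G\colon \Pos\to\ck$ is left adjoint to $\ck(G,-)$, with the canonical morphisms $c_X\colon \ck(G,X)\otimes G\to X$ as its counit. As $G$ is a strong generator, the copower presentation $\coprod_{\ck(G,X)}G\to X$ (over the set of all maps $G\to X$) factorizes through $c_X$, so each $c_X$ is an extremal epimorphism; and by Example~\ref{R2} the tensor $\ck(G,X)\otimes G$ is itself a reflexive coinserter of copowers of $G$. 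Thus everything in sight is assembled from copowers of $G$.

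First I would form the prekernel pair $(p_0,p_1)$ of $c_K$ (Definition~\ref{D3.4}). It is reflexive, since the identity pair satisfies $c_K\cdot\id\le c_K\cdot\id$ and hence factorizes through $(p_0,p_1)$, yielding a joint splitting. Its coinserter $e\colon \ck(G,K)\otimes G\to C$ therefore exists by hypothesis and induces $m\colon C\to K$ with $m\cdot e=c_K$; as $c_K$ is epic, so is $m$. The key claim is that $m$ is invertible, so that $c_K$ is a subregular epimorphism presenting $K$ as the coinserter of its prekernel pair. Here subregular projectivity of $G$ enters: applying $\ck(G,-)$ to $e$ gives a surjection (Definition~\ref{Def3.15}, Example~\ref{E:pos}), while the triangle identity of the adjunction makes $\ck(G,c_K)$ a split epimorphism. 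Comparing the two prekernel pairs by mutual factorization shows $\mathrm{prekernel}(c_K)=\mathrm{prekernel}(e)$, and then an argument through $\ck(G,-)$ forces the two legs of the prekernel pair of $m$ to be comparable, whence $m$ is order-reflecting and thus monic. Being a monic factor of the extremal epimorphism $c_K$, it is an isomorphism. I expect this invertibility of $m$ — equivalently, that the canonical morphism is not merely extremal but subregular — to be the main obstacle, and the place where strong generation, subregular projectivity and the existence of (reflexive) coinserters of prekernel pairs are jointly needed.

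With the presentation $K=\mathrm{coins}(p_0,p_1)$ in hand I would verify the two density clauses of Remark~\ref{R4}(1) for the subcategory $\ca$ of finite copowers. Condition (b) is the easier one: given $k,\bar k\colon K\to L$ with $k\cdot a\le\bar k\cdot a$ for every $a\colon G\to K$ (the case $n=1$ of a cocone comparison), naturality of $\widehat{(-)}$ (Remark~\ref{R}) turns this into $k\cdot c_K\le\bar k\cdot c_K$, and since $c_K$ is subregular it is order-reflecting (Example~\ref{E:pos}(1)), whence $k\le\bar k$. For condition (a) I would start from a cocone $(\gamma_{(A,a)})$ over $\ca/K$ with codomain $L$, use its values on the copies of $G$ to assemble a morphism $|\ck(G,K)|\otimes G\to L$ out of the copower, show — using the cocone's compatibility on the copowers $2\otimes G$ — that it respects the coinserter relations of Example~\ref{R2} and the prekernel pair, and thereby factor it through $c_K$ to obtain the required $k\colon K\to L$; the identity $\gamma_{(A,a)}=k\cdot a$ for all finite copowers then follows since a morphism out of $n\otimes G$ is determined by its $n$ components and the cocone is compatible.

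Finally, abstract finiteness (Definition~\ref{D3.15}) is what confines this argument to \emph{finite} copowers: every morphism $G\to P\otimes G$, and in particular every relation witnessing the prekernel pair above, factorizes through a finite subtensor, so the defining relations of the presentation of $K$ can be tested on finite copowers and a cocone defined only on $\ca/K$ already carries the data needed to induce $k$. Threading this finiteness reduction through the coinserter universal property in clause (a) is the second, more bookkeeping-heavy, difficulty; it is the $\Pos$-enriched analogue of the passage from copowers to finite copowers in the unordered case of Theorem~\ref{th1.6}.
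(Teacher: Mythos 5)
Your first half is essentially the paper's own argument: the paper likewise forms the prekernel pair of $c_K$, takes its coinserter $e$, proves the comparison $m$ monic by lifting test maps $v_0,v_1\colon G\to C$ through $e$ via subregular projectivity (Definition~\ref{Def3.15}), factoring through the prekernel pair and using $eu_0\leq eu_1$, and then kills $m$ against an extremal epimorphism from a copower; your treatment of density clause (b) via $k\cdot c_K\leq \bar k\cdot c_K$ and order-reflection of subregular epimorphisms (Example~\ref{E:pos}(1), Remark~\ref{R}(4)) is also exactly the paper's. The genuine gap sits in clause (a), at precisely the point you defer as ``bookkeeping-heavy''. Abstract finiteness (Definition~\ref{D3.15}) factorizes $u\cdot r,\,v\cdot r\colon G\to\ck(G,K)\otimes G$ through $m\otimes G$ for a finite \emph{subposet} $m\colon M\hookrightarrow\ck(G,K)$ --- a finite tensor over a generally nondiscrete poset, \emph{not} a finite copower. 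Your cocone is given only on $\ca/K$, i.e.\ on discrete copowers $n\otimes G$, so the assertion that ``the defining relations of the presentation of $K$ can be tested on finite copowers'' is unsupported as stated: the cocone has no value at $M\otimes G$ with which to test them. The paper spends its steps (3)--(5) exactly here: it extends the cocone $(-)'$ to all finite tensors $P\otimes G$ along the canonical coinserter $k_P\otimes G\colon |P|\otimes G\to P\otimes G$ of Example~\ref{R2}, proves this extension natural with respect to maps $u\otimes G$, and only then obtains $qu\leq qv$. Alternatively one could stay with copowers, but that requires an idea absent from your proposal: $k_M\otimes G$ is itself a coinserter, hence a subregular epimorphism, so a \emph{second} use of subregular projectivity of $G$ lifts the factorized pair to morphisms $G\to |M|\otimes G$, which does lie in $\ca$, where monotonicity of the cocone together with $c_Ku\leq c_Kv$ closes the argument. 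Without one of these two devices the factorization of $q$ through $c_K$ does not go through.

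A second, smaller defect: your justification of the coinserter relation for the assembled map $|\ck(G,K)|\otimes G\to L$ --- ``the cocone's compatibility on the copowers $2\otimes G$'' --- cannot work, because $2\otimes G$ is a \emph{discrete} copower, and compatibility there yields only the equalities $\gamma_{[a_0,a_1]}\cdot\iota_i=\gamma_{a_i}$, never the inequality $\gamma_{a_0}\leq\gamma_{a_1}$ needed when $a_0\leq a_1$ in $\ck(G,K)$. What is actually required is that the cocone is an enriched (monotone) one, i.e.\ $f\leq g$ implies $f'\leq g'$; this is precisely what the paper invokes when verifying its inequality \thetag{a2}. On the positive side, your assembly of $q$ directly on the large copower $|\ck(G,K)|\otimes G$, factored through the canonical coinserter, is a legitimate and somewhat slicker substitute for the paper's directed-colimit construction of $q$ over finite subposets in its step (5) --- but only once the two repairs above are supplied.
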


\begin{proof}
(1) Let $\ca$ denote  the full subcategory of $\ck$ on all $n. G =\coprod\limits_{n} G$, $n\in \N$. By Remark \ref{R:dense} (1), we are to prove that given objects $K$ and $L$ and a cocone of the canonical diagram $\ca/K\to \ck$ with codomain $L$, notation
$$
\xymatrix@R=.21pc{
&n\otimes G \ar[r]^{\quad f\quad } & K&\\
\ar@{-}[rrr]&&&\\
&n\otimes G \ar[r]_{f'} & L&
}\qquad (n\in \N)
$$
then (2a) in \ref{R:dense} holds: there exists a  morphism
$$
k\colon K\to L \quad \mbox{with} \quad f' = kf\quad \mbox{for all $f\colon n. G\to K$}.
$$
Property (b) in \ref{R:dense}  follows from (1).
Given $\bar k$ with $k\cdot f \leq \bar k \cdot f$ for all $f\colon G\to K$ (we can restrict ourselves to $n=1$), it follows  that $k\cdot c_K \leq \bar k \cdot c_K$, use Remark~\ref{R}(4). Thus $k\leq \bar k$, since $c_K$ is a subregular  epimorphism.

 \vskip 1mm
 (2) We extend $(-)^\prime$ to all finite tensors of $G$. Given
a finite poset $P$ and a morphism
 $f\colon P\otimes G\to K$ we define $f'\colon P\otimes G\to L$ as follows. We use the  coinserter of Remark~\ref{R2} (2):  %yields  a coinserter in $\ck$
 \begin{equation*}\tag{a1}
\xymatrix@C=4pc{
P^{(2)} . G \ar @<1ex>[r]^{\pi_1. G}
\ar@<-1ex> [r]_{\pi_0. G} & \ |P|\otimes G \ 
\ar[r]^{k_ p\otimes G}
  \ar @{-->}[dr]_{f\cdot k_p\otimes G} & P\otimes G
\ar[d]^{f}\\
&& K
}
\end{equation*}
For $f\cdot k_p\otimes G$ $^{\ast)}$ \footnote{$^{\ast)}$ 
For simplifying our notation we assume that $\otimes$ binds stronger that composition. Thus $f\cdot (k_p \otimes G)$ is written as $f\cdot k_p\otimes G$.}
we already have, since $|P| \otimes G= |P|.G$ 
the corresponding morphism
$(f\cdot k_p\otimes G)^\prime \colon |P| \times G\to L$. We define $f'$ by verifying the following inequality
\begin{equation*}\tag{a2}
(f\cdot k_p\otimes G)^\prime \cdot \pi_0\otimes G \leq (f\cdot k_p \otimes G)^\prime \cdot \pi_1\otimes G\,.
\end{equation*}
Thus, $(f\cdot k_p\otimes G)^\prime$ factorizes uniquely through the coinserter   $k_p\otimes G$. Then $f'\colon P\otimes G \to L$ is defined as that factorization:
\begin{equation*}\tag{a3}
\xymatrix{
P\otimes G \ar [r]^{f'} & L\\
|P|\otimes G \ar[u]^{k_p\otimes G} \ar[ur]_{(f\cdot k_p\otimes G)^\prime}
&
}
\end{equation*}

To verify \thetag{a2}, use that
since $(-)^\prime$ is a cocone, it is monotone ($f\leq g$ implies $f'\leq g'$) and for every morphism $d\colon n. G\to m. G$ of $\ca$  we have the following implication

{\vbox{\begin{equation*}\tag{a4}
\xymatrix@C=.5pc{
n. G  \ar[rd]_{f} \ar[rr]^{d} \ar@{}[drr]|{\leq}&& m. G \ar[ld]^{g}\\
& K &
}\quad \quad \quad 
\xymatrix@C=.5pc{
n. G \ar[dr]_{f'} \ar[rr]^{d} \ar@{}[drr]|{\leq}&& m. G\ar[dl]^{ g'}\\
& L &
}
\end{equation*}
\quad \vskip -1.5cm \hskip 5.6cm $\Rightarrow$
}}
\vskip0.91cm
Applying this to $d=\pi_k . G$, for $k=0,1$, we get
\begin{align*}
(f\cdot k_p\otimes G)^\prime \cdot (\pi_k . G) & = 
(f\cdot k_p\otimes G\cdot \pi_k\otimes G)^\prime\\
&= (f\cdot [k_p \cdot \pi_k]\otimes G)^\prime\,.
\end{align*}
Since $(-)^\prime$ is monotone, and $k_p\cdot \pi_0\leq k_p \cdot \pi_1$ implies $[k_p\cdot \pi_0]\otimes G\leq [k_p \cdot \pi_1] \otimes G$ (because $-\otimes G$ is locally monotone by Proposition \ref{P}), we get
\begin{align*}
(f\cdot k_p\otimes G)^\prime \cdot \pi_0\otimes G &= (f\cdot [k_p\cdot \pi_0]\otimes G)^\prime\\
&\leq (f\cdot [k_p\cdot \pi_1]\otimes G)^\prime\\
&= (f\cdot k_p\otimes G)^\prime \cdot \pi_1\otimes G\,.
\end{align*} 

\vskip 1mm
(3) Given a monotone map $u\colon P\to Q$ we prove the following implication

%\begin{center}
\quad \hskip 1cm
$
\xymatrix@C=.5pc{
P\otimes G  \ar[rd]_{f} \ar[rr]^{u\otimes G} \ar@{}[drr]|{\circlearrowright}&& Q\otimes G \ar[ld]^{g}\\
& K &
}$
\quad\vskip-1cm \hskip 5.5cm$\Rightarrow $
\quad \vskip-1.4cm\hskip6.5cm
$\xymatrix@C=.5pc{
P\otimes G \ar[dr]_{f'} \ar[rr]^{u\otimes G}\ar@{}[drr]|{\circlearrowright}&& Q\otimes G\ar[dl]^{ g'}\\
& L &
}
$
%\end{center}
\\
Since $k_P$ and $k_Q$ are carried by identity maps, we have
$$
u\cdot k_P= k_Q \cdot |u| \colon |P|\otimes Q \to Q\otimes G\,.
$$
 Consider the following diagram
 $$
\xymatrix{
|P|.G= |P|\otimes G \ar[rr]^{k_P\otimes G}
\ar[dd]_{|u|. G}
\ar[dr]^{(f\cdot k_P\otimes G)'}&&
P\otimes Q \ar[dd]^{u\otimes G}
\ar[ld]_{f'}\\
&L &\\
|Q|.G=|Q|\otimes G \ar[ur]_{(g\cdot k_Q \otimes G)'}
\ar[rr]_{k_Q\otimes G}&& Q\otimes G \ar[ul]^{g'}
}
$$
The square commutes 
because $k_P$ and $k_Q$ are carried by identity maps,
 and the upper and lower triangles commute by \thetag{a3}. The left-hand triangle commutes since \thetag{a4} yields for $d=|u|\otimes G$\ the following equality
\begin{align*}
\big(g\cdot k_Q \otimes G\big)^\prime \cdot \big(u. G \big)&= \big(g\cdot k_Q \otimes G\cdot |u|\otimes G\big)^\prime\\
&= \big(g\cdot u \otimes G\cdot k_P \otimes G\big)^\prime\\
&=\big(f\cdot k_P \otimes G\big)^\prime\,.
\end{align*}
Thus the right-hand triangle commutes when precomposed by $k_P\otimes G$. Since $k_P\otimes G$ is an epimorphism (see Proposition \ref{P}), $f'=g'\cdot u\otimes G'$.

\vskip 2mm 
(4) It follows  from Remark~\ref{R}(3) that the canonical  morphism $c_K$
fulfils, for every finite subposet $m\colon M\hookrightarrow \ck(G,K)$, that the following triangle commutes:
\begin{equation*}\tag{a5}
\xymatrix@C=4pc{
M\otimes G \ar[d]_{m\otimes G} \ar[dr]^{\hat m}&\\
\ck (G,K) \otimes G \ar[r]_{c_K} & K
}
\end{equation*}
 Since $\ck(G,K)$ is a directed colimit of its finite subposets in $\Pos$, we conclude
(from Proposition \ref{P} again)
 that all $m\otimes G$ form a colimit cocone in $\ck$. The morphisms $\hat m'\colon M\otimes G \to L$ form a cocone of that diagram.  Indeed, given $M_1 \subseteq M_2\subseteq \ck(G, K)$ we denote by $u\colon M_1\to M_2$  the inclusion map and derive $\hat m_1' =  \hat m_2' \cdot u\otimes G$ from (2):\\
$
\xymatrix@R=4pc@C=1pc{
M_1\otimes G \ar[rr]^{u\otimes G} 
\ar[ddr]_{\hat m_1}
\ar[dr]^{m_1\otimes G}
\ar@{}[drr]|{\circlearrowright}&&
M_2\otimes  G \ar[dl]_{m_2\otimes G}
\ar[ddl]^{\hat m_2}\\
& \ck (G,K) \otimes G \ar[d]^{c_K} &\\
& K &
}
$
\quad\vskip-3cm \hskip 7cm$\Rightarrow $
\quad \vskip -2.2cm \hskip 8cm
$
\xymatrix@R=9pc{
M_1\ar[rr]^{u\otimes G} \ar[dr]_{\hat m_1'}
\ar@{}[drr]|{\circlearrowright}&
 & M_2 \ar[dl]^{\hat m_2'}\\
& L &
}
$
\\
Thus there exists a unique morphism
$$ q\colon \ck (G,K)\otimes G\to L
$$
making the following triangles commutative
\begin{equation*}\tag{a6}
\xymatrix@C=4pc@R=4pc{
M\otimes G \ar [d]_{m\otimes G} \ar[dr]^{(\hat m)'} &\\
\ck (G,K)\otimes G \ar[r]_{q} & L
} 
\end{equation*}
{for all finite subposets  $m\colon M\hookrightarrow \ck(G,K)$}. By Lemma \ref{L:ca} we can
express $c_K$ as a coinserter of a parallel pair $u_0$, $u_1$:
$$
\xymatrix{
U \ar @<1ex>[r]^{u_1\qquad\quad}
\ar@<-1ex> [r]_{u_0\qquad\quad} & \ \ck (G,K)\otimes G \ 
\ar[r]^{\quad\quad c_K}
  \ar [d]_{q} &K
\ar @{-->}[dl]^{k}\\
&L&
}$$
In the next point we prove that $qu_0\leq qu_1$. Thus $q$ factorizes as $k\cdot c_K$. Then $k$ is the desired morphism: we prove
$$
f' = k\cdot f \quad \mbox{for all \quad} f\colon n\otimes G\to K\,.
$$
It is sufficient to verify this for $n=1$ since for general $f= [f_0, \dots , f_n]$ we have $f' = [f_0', \dots f_n']$ (apply \thetag{a4} to the coproduct injections $u\colon G\to n\otimes G$) and thus $f_i' = k\cdot f_i$ imply $f' = k\cdot f$.

Given $f\colon G\to K$ we have the subposet $m \colon\{f\}\hookrightarrow \ck(G,K)$ with $\hat m=f$, for which \thetag{a6} yields
$$
q\cdot m\otimes G=f'\,.
$$
Since $q=k\cdot c_K$ and $c_K\cdot m\otimes G =\hat m=f$ by \thetag{a5}, we get
$$
k\cdot f = f'\,.
$$
\vskip 2mm
(5) It remains to verify $q\cdot u_0\leq q\cdot u_1$. Since $G$ is a generator,  this is equivalent to 
$$
q\cdot u_0\cdot r\leq  q\cdot u_1\cdot r \quad \mbox{for all} \quad r\colon G\to U\,.
$$
For the poset $P=\ck(G,K)$ we have the morphism $k\colon |P| \to P$ carried by the identity map. It is a subregular epimorphism in $\Pos$ (Example \ref{E:subreg}), thus, due to Proposition \ref{P}  $k\otimes G\colon |P| \otimes G \to P\otimes G$ is a subregular epimorphism in $\ck$. Since $G$ is a subregular projective, we have factorizations
$$
u_i r = (k\otimes G) u_i' \quad \mbox{for}\quad u_i' \colon G\to |P| \otimes G \qquad (i=0,1)\,.
$$
Moreover, $G$ is abstractly finite, thus there exists a finite subset $m \colon M\to |P|$ with factorizations
$$
u_i'= (m\otimes G) .v_i \quad \mbox{for}\quad v_i\colon G\to M.G \qquad (i=0,1)\,.
$$
 In other words: $v_i$ is a factorization of $u_ir$ through $(km)\otimes G$:
$$
\xymatrix@C=4pc{
G \ar @<1ex>[r]^{ v_1\qquad\quad}
\ar@<-1ex> [r]_{v_0\qquad\quad}
\ar[d]_{r} & \ M \otimes G \ 
\ar[dr]^{\quad\quad \widehat {km}}
  \ar [d]^{(km)\otimes G} &\\
U \ar @<1ex>[r]^{ u_1\qquad\quad}
\ar@<-1ex> [r]_{ u_0\qquad\quad}& \ck (G,L)\otimes G \ar[d]^{q} \ar[r]_{\qquad c_K}& K\\
& L &
}$$
We know that $c_k u_0 r \leq c_k u_1 r$, and this by \thetag{a5} impllies 
$$
\widehat{km} v_0 \leq \widehat{km} v_1\,.
$$
From \thetag{a4} we then obtain $\widehat{km}' v_0\leq \widehat{km}' v_1$, thus,
$$
qu_0 r = \widehat{km}' v_0 \leq \widehat{km}' v_1 = qu_1 r\,.
$$
\end{proof}

\vskip 1mm
\begin{remark}\label{R:ref}
(1) A full subcategory $\ck$ of $\cl$ is  called reflective if the embedding $\ck\hookrightarrow \cl$ has a left adjoint. Suppose that $\cl$ has weighted limits and colimits, then so does every full reflective subcategory (\cite{K}, Section 3.5). This is in particular the case if $\cl$ is the functor category $[\ca^{\op}, \Pos]$ for $\ca$ small (\cite{K}, Section 3.3).

\vskip 1mm
(2)
Let $G$ be an object with copowers in a category with reflexive coinserters. Then every parallel pair $f$, $g \colon \coprod\limits_{A} G \to \coprod\limits_{B} G$ has a conical coequalizer. Indeed for $X= \coprod\limits_{A} G + \coprod\limits_{A} G + \coprod\limits_{B}G$ consider the reflexive pair
$
[f,g, \id] , [g,f,\id] \colon X \to \coprod\limits_{B}G\,.
$
Its coinserter is precisely a conical coequalizer of $f$ and $g$.

\end{remark}

\begin{prop}\label{P:lim}

Let $G$ be  an abstractly  finite, subregularly projective  strong  generator 
in $\ck$. If $\ck$ has 
subkernel pairs and reflexive coinserters, then it has weighted limits and weighted colimits.
\end{prop}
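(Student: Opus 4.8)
The plan is to realize $\ck$ as a reflective full subcategory of a presheaf category and then invoke Remark~\ref{R:ref}(1). By Theorem~\ref{P1} the full subcategory $\ca$ of all finite copowers $n\otimes G$ $(n\in\N)$ is dense in $\ck$, and by Remark~\ref{R4} this means precisely that the nerve functor
$$
E\colon \ck\to[\ca^{\op},\Pos],\qquad X\mapsto \ck(J-,X)\qquad(J\colon\ca\hookrightarrow\ck\ \text{the inclusion}),
$$
is fully faithful: condition (a) makes the comparison $\ck(K,L)\to[\ca^{\op},\Pos](EK,EL)$ surjective, and condition (b), together with the monotonicity of composition, makes it an order-embedding. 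Since $\ca$ is small, $[\ca^{\op},\Pos]$ has all weighted limits and colimits (Remark~\ref{R:ref}(1)). Thus it suffices to prove that the full embedding $E$ is reflective: by the first assertion of Remark~\ref{R:ref}(1) a full reflective subcategory of a category with weighted limits and colimits again has both, and this is exactly the claim of the proposition.

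To produce a left adjoint to $E$, observe that the reflection of a weight $W\colon\ca^{\op}\to\Pos$ is forced to be the weighted colimit $W\star J$ of the inclusion $J\colon\ca\hookrightarrow\ck$. Indeed, by the defining isomorphism of a weighted colimit (Remark~\ref{re2.9}(1)),
$$
\ck(W\star J,X)\cong[\ca^{\op},\Pos]\big(W,\ck(J-,X)\big)=[\ca^{\op},\Pos](W,EX),
$$
so as soon as $W\star J$ exists for every $W$, the assignment $W\mapsto W\star J$ is left adjoint to $E$. Hence the entire task reduces to showing that these weighted colimits exist in $\ck$, and the reflection isomorphism is then automatic.

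For existence I would first record which colimits $\ck$ has. Since $G$ is abstractly finite it has all tensors $P\otimes G$ (Definition~\ref{D3.15}), so by Proposition~\ref{P} the functor $-\otimes G\colon\Pos\to\ck$ is a left adjoint and preserves coproducts; consequently every coproduct of objects of $\ca$ exists and is again a tensor, $\coprod_i(n_i\otimes G)=\big(\coprod_i n_i\big)\otimes G$, and iterated tensors $P\otimes(Q\otimes G)=(P\times Q)\otimes G$ exist as well. Moreover $\ck$ has reflexive coinserters by hypothesis, hence also the conical coequalizers of parallel pairs between such copowers (Remark~\ref{R:ref}(2)). One then expresses $W$ as a colimit of representables via the density (co-Yoneda) presentation of the enriched Yoneda embedding — built from coproducts, the coinserters that recover the order of each weight-value (Example~\ref{R2}), and a coequalizer — and transports this diagram along $J$, so that representables become the objects $n\otimes G$ and the coproducts and tensors become the copowers listed above. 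The resulting diagram has a colimit in $\ck$, and this colimit is $W\star J$.

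The main obstacle is this last construction. In the $\Pos$-enriched setting the canonical presentation of a weight $W$ as a colimit of representables is genuinely more delicate than in the ordinary $\Set$-based case, because one must reproduce not only the elements of each $WA$ but also its order and the order of the hom-posets $\ca(-,-)$; matching this presentation with the colimits actually available in $\ck$ — the tensors coming from abstract finiteness, the reflexive coinserters, and the coequalizers of Remark~\ref{R:ref}(2) — is where the real work lies. That the object obtained is independent of the chosen presentation follows a posteriori from the universal property, and once the weighted colimits $W\star J$ are in hand the reflectivity of $E$, and hence the conclusion via Remark~\ref{R:ref}(1), is immediate.
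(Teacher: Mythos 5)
Your reduction is sound and follows the same architecture as the paper's proof: density of $\ca=\{n\otimes G\}$ (Theorem~\ref{P1}) makes the nerve $E\colon\ck\to[\ca^{\op},\Pos]$ fully faithful, and the conclusion follows from Remark~\ref{R:ref}(1) once $E$ is shown to be reflective. But the proposal has a genuine gap exactly at the decisive point: you correctly identify that the putative reflection of a weight $W$ must be the weighted colimit $W\star J$, so that reflectivity is \emph{equivalent} to the existence of $W\star J$ for all $W\colon\ca^{\op}\to\Pos$ --- and then you do not prove this existence. Your final paragraph explicitly concedes that matching the enriched co-Yoneda presentation of $W$ with the colimits available in $\ck$ ``is where the real work lies.'' Since the existence of these colimits is the entire mathematical content of the proposition (the adjunction isomorphism, as you note, is then automatic), what you have written shows only that $\ck$ has weighted (co)limits provided certain colimits exist; the construction itself is missing.

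For comparison, the paper does not invoke co-Yoneda at all: for each $H\in[\ca^{\op},\Pos]$ it forms the ordinary diagram of elements $D_H\colon\ce_H\to\ck$, builds its conical colimit from conical coproducts of copowers of $G$ (these exist and are conical because $G$ has tensors) together with a coequalizer manufactured from a reflexive coinserter as in Remark~\ref{R:ref}(2), sets $LH=\colim D_H$, and verifies $L\dashv E$ directly by identifying cocones of $D_H$ over $K$ with transformations $H\to EK$. If you prefer to keep your coend route, the ingredients you assembled do suffice, and the gap is fillable: $W\star J=\int^A WA\otimes JA$ is the conical coequalizer of the standard parallel pair
$$
\coprod_{A,B}\big(WB\times\ca(A,B)\big)\otimes JA \;\rightrightarrows\; \coprod_{A}WA\otimes JA\,,
$$
where one leg uses the contravariant action of $W$ and the other the covariant action of $J$. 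Every object occurring here is a tensor of $G$, by your own observations $P\otimes(n\otimes G)\cong(P\times n)\otimes G$ and $\coprod_i(P_i\otimes G)\cong\big(\coprod_i P_i\big)\otimes G$, and the trick of Remark~\ref{R:ref}(2) applies verbatim to any parallel pair between tensors of $G$ (replace the copowers by $(P+P+Q)\otimes G$), yielding the coequalizer as a reflexive coinserter and hence with the enriched conical universal property. Note that your worry about recording the order of the weight-values is substantive rather than pedantic --- in your route the orders enter through the tensors $WA\otimes JA$, whereas the paper's category $\ce_H$ carries no morphisms witnessing $x\le x'$ in $HA$ --- but flagging a delicacy is not the same as resolving it; until the displayed construction (or the paper's alternative) is actually carried out, the proof is incomplete.
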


\begin{proof}
(1) Let $\ca$ be the full subcategory of finite copowers $n. G$ which we know is dense by Theorem \ref{P1}. This implies that the functor $E$ %\colon \ck \to [\ca^{\op}, \Pos]
%$ assigning to $K$ the domain-restriction of $\ck(-, K)$ to $\ca^{\op}$
(Notation \ref{N:str}) is full and faithful. In fact, fullness is precisely (2a) in Definition~\ref{R:dense}. To see the faithfulness:
$$
Ef_0 \leq Ef_1 \quad \mbox{implies}\quad f_0\leq f_1\,,
$$
use (2b) in that definition.

Thus, all we need to prove is that $E$ has a left adjoint (see the last step of our proof): then we apply Remark~\ref{R:ref}.

\vskip 1mm
(2) Since $G$, being abstractly finite, has tensors, all copowers ${M}. G$ exist and are conical, i.e., given $u_0$, $u_1\colon {M} . G \to X$  then $u_0\leq u_1$ iff this holds when precomposed by every coproduct injection. 

\vskip 1mm
(3) For every object $H$ of $[\ca^{\op}, \Pos]$ we construct a conical  diagram $D_H \colon \ce_K\to \ck$ `of elements' of $H$.

The objects of $\ce_H$ are pairs $(A,x)$ where $A\in \ca$ and $x\in HA$. For a pair $(A,x)$ and $(B, y)$ of objects morphisms $f\colon (A,x)\to (B,y)$ are those morphisms $f\colon A\to B$ of $\ca$ with $Hf(y)=x$. They are ordered as in $\ca (A,B)$. And we define
$$
D_H\colon \ce_H \to \ck\,, \quad (A,x) \mapsto A\,.
$$
From the conical copowers of $G$ it follows  that $D_H$ has a conical colimit.  We use the standard construction of conical colimits via conical coproducts and conical  coequalizers,  completely analogous to the non-enriched case (\cite{McL}, Thm. V.2.1): put
$$
X=\coprod\limits_{(A,x)} A \quad \mbox{and}\quad Y= \coprod\limits_{f\colon (A,x) \to (B,y)} B
$$
where $X$ is a coproduct ranging over  objects of  $\ce_H$ and $Y$ is one ranging over  morphisms of  $\ce_H$. Let us denote the coproduct injections of $X$ by
$$
i_x \colon A \to X \qquad (x\in HA)\,.
$$
The conical colimit $C$ of $D_H$ is then obtained as the following coequalizer
(using Remark~\ref{R:ref}(2)):
$$
\xymatrix@1{ 
Y \ar@<1ex>[r]^{p}
\ar @<-1ex>[r]_{q} \ & \ \ X\  \ar[r]^{c} &\ C
}
$$
where the components of $p$ and $q$ corresponding to $f\colon (A,x) \to (B,y)$ are $i_x$ and $i_y\cdot f$, resp. The colimit cocone is $ci_x \colon (A,x) \to C$.

\vskip 1mm
(4) We thus can define a functor
$$
L\colon [\ca^{\op}, \Pos] \to \ck
$$
by assigning to every object $H$ the colimit
$$ LH = \colim D_H\,.
$$
We verify that $L$ is a left adjoint of $E$. Now consider an object $K\in \ck$ and the corresponding object $EK = \ck(-, K)\big/ \ca^{\op}$. To give a natural transformation from $H$ to $EK$ is precisely to give a cocone of $D_H$ with codomain $K$. We thus obtain the desired natural order-isomorphism
$$
\xymatrix@R=.21pc{
&H \ar[r] & EK&\\
\ar@{-}[rrr]&&&\\
&LH \ar[r] &K&
}
$$
proving that $L$ is left adjoint to $E$.
\end{proof}

%%%%%%%%%%%%%%%%%%%%%%%%%%
\section{Varieties as abstract categories}\label{sec3} %ted 4

\begin{nota}\label{no3.1}
Let $\Sigma = (\Sigma)_{n\in \mathbb{N}}$ be a signature. The category
$$
\Sigma\mbox{-}\Alg
$$
has as objects ordered $\Sigma$-algebras: posets with a structure of a $\Sigma$-algebra whose operations are monotone. Morphisms are the monotone homomorphisms.
\end{nota}

A \textit{variety} of ordered algebras is a full subcategory of  $\Sigma$-$\Alg$  specified by a set of  inequations between terms.

\begin{example}%3.2
(1) Ordered monoids form a variety  of $\Sigma$-algebras for $\Sigma =\{\circ, e\}$ specified by the usual monoid equations.

(2) Ordered monoids with the least element   $e$ form the subvariety specified by the inequation $e\leq x$.
\end{example}

For a given algebra $A$ a \textit{subalgebra} is represented  by a homomorphism $m\colon B\to A$ carried by an order-embedding: $x\leq y$ in $B$ iff $m(x) \leq m(y)$ in $A$. A \textit{quotient  algebra} is represented by a surjective monotone homomorphism $c\colon A\to C$. The following result was sketched by Bloom \cite{B1}, a detailed proof  can be found in \cite {ADV}.

\begin{birk}\label{BVT} %% {Birkhoff Variety Theorem}
A full subcategory of  $\Sigma\mbox{-}\Alg$ is a variety (i.e., can be presented by a set of inequations) iff it is closed under products, subalgebras and quotient algebras.
\end{birk}
%% 17a

Homomorphic images in varieties are precisely the subregular quotients:
\begin{prop}\label{P:surj}
Subregular epimorphisms in a variety of ordered algebras are precisely the surjective homomorphisms.
\end{prop}

\begin{proof}
(1) If $h\colon A\to B$ is subregular, say, a coinserter of $u_0$, $u_1 \colon U\to A$, then it is surjective. Indeed, the subalgebra $B'$ of $B$ on $h[A]$ lies in our variety $\cv$, and $h$ restricts to a morphism $h'\colon A\to B'$ of $\cv$. 
It clearly satisfies $h'u_0 \leq h'u_1$, thus, there exists $f\colon B\to B'$ with $h'=f.h$. The inclusion $i\colon B'\to B$ fulfils
$$
(fi) h' = fh = h'\,,
$$
thus  $fi=\id$ since $h'$ is surjective. From the universal property of $h$ we deduce, since $(if)h=ih'=h$, that $if=\id$. Thus $i= f^{-1}$, proving that $B'=B$, as stated.

(2) If $h\colon A\to B$ is surjective in $\cv$, let $E$ be the  subalgebra of $A\times A$ on all $(x,y)$ with $h(x) \leq h(y)$. (This is closed under operations since $h$ is a nonexpanding homomorphism.) From $A\in \cv$ we conclude $A\times A\in \cv$, hence, $E\in \cv$. The restricted projections $u_0, u_1\colon E \to A$ are  morphisms of $\cv$ with $u_0\leq u_1$, and they form clearly a reflexive pair (since $E$ contains the diagonal of $A$). It is easy to see that $h$ is a coinserter of $u_0$, $u_1$.
\end{proof}

% %%%%%%%%%%%%%%
The concept of effective projective (Definition~\ref{D:eff}) has the following enriched  variant:

\begin{defi}\label{de3.3}
An object whose hom-functor (into $\Pos$) preserves coinserters of reflexive pairs is called  a \textit{subeffective projective}.
\end{defi}

\begin{example}\label{E:var} %3.4
(1) In every variety $\mathcal{V}$ the free algebra $G$ on one generator is a subeffective projective. Indeed, its hom-functor is naturally isomorphic to the forgetful functor $U\colon \cv\to \Pos$.

As proved in \cite{ADV}, $U$ is a monadic functor  preserving reflexive coinserters.

\vskip 1mm
(2)
Moreover, $G$ is finitely presentable in the enriched sense since $U$ is finitary. And it is a subregular  generator. Indeed, the universal property of $P\otimes G$ implies for every poset $P$ that
$$
P\otimes G \quad \mbox{is a free algebra on $P$ in $\mathcal{V}$}\,.
$$
For every algebra $K$ in $\mathcal{V}$ the canonical homomorphism
$$
c_K \colon \mathcal{V}(G,K)\otimes G\to K
$$
is the unique extension of $\id_K$ to a homomorphism from the free algebra on 
$UK$ to $K$, and this  is a subregular epimorphism by Proposition \ref{P:surj}.

(3)
 Finally, $G$ is a strong generator (Definition \ref{D:str}). Indeed, 
$$
U\colon \cv \to \Pos
$$ 
is conservative, thus so is $E\colon \cv \to \Pos^{\cv(G,G)^{\op}}$ because $U=V\cdot E$ for the  forgetful $V\colon \Pos^{\cv(G,G)^{\op}}\to \Pos$.
\end{example}

\begin{remark}\label{R4.6}
Every subeffective projective is a subregular projective, provided that  subkernel pairs exist. Indeed, every subregular  epimorphism is the coinserter of its subkernel pair (which is reflexive).

\end{remark}

\begin{theorem}\label{T:main}%3.5
A category with  reflexive coinserters is equivalent to a variety of ordered algebras iff it has a subregular   generator which is an abstractly  finite subeffective projective.
\end{theorem}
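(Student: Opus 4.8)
The plan is to treat the two implications separately, with the backward (``if'') direction carrying all the weight and leaning on the density and (co)completeness results just established. For necessity, suppose $\ck$ is equivalent to a variety $\cv$ of ordered algebras. Here everything is supplied by Example~\ref{E:var}: the free algebra $G$ on one generator has hom-functor naturally isomorphic to the forgetful $U\colon\cv\to\Pos$, which is monadic for a finitary monad preserving reflexive coinserters; hence $G$ is a subeffective projective, it is a strong generator, and, since $U$ is finitary so that $G$ is finitely presentable, it is abstractly finite. That $\cv$ has prekernel pairs and reflexive coinserters is then immediate, as a category of models having all weighted limits and the relevant colimits.

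For sufficiency, assume $\ck$ has prekernel pairs and reflexive coinserters and a strong generator $G$ that is an abstractly finite subeffective projective. First I would invoke Remark~\ref{R4.6} to upgrade $G$ to a subregular projective, so that both Theorem~\ref{P1} and Proposition~\ref{P:lim} apply. By Theorem~\ref{P1} the full subcategory $\ca$ of finite copowers $n\otimes G$ ($n\in\N$) is dense, whence the functor $E\colon\ck\to[\ca^{\op},\Pos]$, $K\mapsto\ck(-,K)\big/\ca^{\op}$, is full and faithful by conditions (a) and (b) of Remark~\ref{R4}, and by the proof of Proposition~\ref{P:lim} it has a left adjoint $L$. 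Thus $\ck$ is, up to equivalence, a full reflective subcategory of $[\ca^{\op},\Pos]$ and, $E$ being full and faithful, its essential image consists precisely of those $H$ for which the unit $\eta_H\colon H\to ELH$ is invertible.

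Next I would identify this essential image. Put $\ct=\ca^{\op}$: since $\ca$ has finite coproducts, with $(n\otimes G)+(m\otimes G)=(n+m)\otimes G$, the category $\ct$ has finite products, every object is a finite power of $G$, and together with the identity-on-objects product-preserving functor $\cn^{\op}\to\ct$ it is a discrete Lawvere theory. Each $E(K)$ sends $n\otimes G$ to $\ck(G,K)^{n}$ and so preserves finite products; hence $E$ factors through the full subcategory $\Mod\ct\hookrightarrow[\ca^{\op},\Pos]$ of product-preserving functors, and the essential image is contained in $\Mod\ct$. The plan is to prove the reverse inclusion, i.e.\ that $E\colon\ck\to\Mod\ct$ is essentially surjective, and then to conclude by \cite{LR}, where the enriched models of a discrete Lawvere theory are shown to form a variety of ordered algebras; alternatively one realizes $\Mod\ct$ as a full subcategory of some $\Sigma\mbox{-}\Alg$ closed under products, subalgebras and quotients and applies the Birkhoff Variety Theorem~\ref{BVT}.

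The hard part is exactly this essential surjectivity, and it is where the $\Pos$-enrichment bites. My approach is to show that $E$ preserves sifted colimits and that every $H\in\Mod\ct$ is a canonical sifted colimit of representables $\ca(-,A)=E(A)$, which themselves lie in $\Mod\ct$. The first claim reduces to the assertion that each finite copower $n\otimes G$ is perfectly presentable, i.e.\ $\ck(n\otimes G,-)$ preserves sifted colimits; by Example~\ref{E2.11} it suffices to treat directed colimits and reflexive coequalizers separately, and for $n=1$ these are handled respectively by abstract finiteness of $G$ and by $G$ being a subeffective projective, with the general $n$ following from the product structure. Granting the second claim, $L$ preserves all colimits and restricts to the identity on representables, so $E(LH)=E(\colim_j\, A_j)\cong\colim_j E(A_j)\cong\colim_j \ca(-,A_j)\cong H$ with the comparison being $\eta_H$, so $\eta_H$ is invertible and $H$ lies in the essential image. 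The delicate point throughout is that $\ck(G,-)$ does not preserve arbitrary colimits, so one must verify that the relevant comparison maps are genuine order-isomorphisms and not merely bijections; the coequalizer piece of each sifted colimit arises, via Remark~\ref{R:ref}(2), from a reflexive coinserter that $\ck(G,-)$ does preserve by subeffective projectivity, and this is the mechanism I expect to require the most careful bookkeeping.
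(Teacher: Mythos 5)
Your necessity half coincides with the paper's (it is exactly Example~\ref{E:var}), but your sufficiency half departs from it: the paper never passes through $\Mod\ct$. It constructs a concrete full embedding $E\colon\ck\to\Sigma\mbox{-}\Alg$ for the signature $\Sigma_n=\ck_0(G,n\otimes G)$, with $EK$ carried by $\ck(G,K)$, and then verifies the three closure conditions of the Birkhoff Variety Theorem~\ref{BVT} directly: products via Proposition~\ref{P:lim}; subalgebras via the (subregular epi, embedding) factorization system built from prekernel pairs and reflexive coinserters, with abstract finiteness used only in its literal form (factorization of morphisms $G\to P\otimes G$ through finite subtensors); quotients via prekernel pairs and preserved reflexive coinserters. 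Your Lawvere-theory route is in principle legitimate --- it is essentially the mechanism of Section~5 (Theorem~\ref{th5.14}) --- but as written it has genuine gaps at exactly the point you identify as carrying the weight.

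The main gap is your claim that perfect presentability of $n\otimes G$, and hence preservation of the filtered part of sifted colimits by $E$, is ``handled by abstract finiteness of $G$.'' Definition~\ref{D3.15} only controls morphisms from $G$ into tensors $P\otimes G$: it says nothing about morphisms into colimits of arbitrary directed diagrams in $\ck$, and it contains no essential-uniqueness or order-reflecting clause, so it does not yield that $\ck(G,-)$ preserves filtered colimits (abstractly finite is strictly weaker than finitely presentable in a general category; the only filtered colimits the paper extracts from it are colimits of finite subtensors of $\ck(G,K)\otimes G$, as in step (6) of Theorem~\ref{P1}). Without this, your computation $E(LH)\cong\colim_j E(A_j)\cong H$ collapses. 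A second unestablished step is the assertion that every finite-product-preserving $H\colon\ct\to\Pos$ is a sifted colimit of representables: in the $\Pos$-enriched setting ``sifted'' refers to sifted weights, the ordinary category-of-elements argument does not apply, and the statement is essentially the content of $\Sind\ct^{\op}=\Mod\ct$, which the paper obtains only later from Theorem~\ref{perfect2} (proved for categories already known to be varieties) together with \cite{KS} and \cite{LR}; so you must either import those external results wholesale or supply a new enriched argument. Relatedly, your appeal to Example~\ref{E2.11} (directed colimits plus reflexive \emph{coequalizers}) is the unenriched decomposition; the correct enriched statement is Remark~\ref{R5.5}(1), with reflexive \emph{coinserters} --- harmless for the coinserter half, since subeffective projectivity gives exactly that, but symptomatic of the enrichment issues your plan defers. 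The paper's Birkhoff-based proof is designed precisely to avoid needing perfect presentability of $G$ or the sifted-colimit description of models.
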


\begin{proof}
In view of the previous example we need to prove  only the sufficiency:
 if $\ck$ has subkernel pairs and reflexive coinserters and a generator $G$ with the above properties, then it is equivalent to a variety. Recall from Proposition \ref{P:lim} that $\ck$ has weighted limits and colimits.

\vskip 1mm
(1)
$\ck$ has a factorization system with $\ce$ all subregular epimorphisms (coinserters of  some pairs) and $\mathcal{M}$ all embeddings $m$ (Definition \ref{D:emb}). Indeed, let $f\colon X\to Y$ be a morphism
and choose a subkernel 
 pair $p_0$, $p_1 \colon P\to X$.  It is clearly reflexive.  Let $c\colon X\to Z$ be a coinserter of $p_0$, $ p_1$ and $m\colon Z\to Y$ the unique morphism with $f=m\cdot c$. The proof  that $m$ is  an embedding is completely analogous to point (1) of the proof of Theorem~\ref{P1}.

\vskip 1mm
(2)
We now define a full embedding $E\colon \ck \to \Sigma\mbox{-}\Alg$ for the following signature $\Sigma$:
$$
\Sigma_n = \ck_0 (G, n . G) \qquad (n\in \N)\,.
$$
That is, an $n$-ary operation symbol is precisely a morphism $\sigma \colon G \to n. G$ of $\ck_0$, the ordinary category underlying $\ck$.

The algebra $EK$ assigned to an object $K$ has the underlying poset $\ck(G,K)$. Given  an $n$-ary operation $\sigma$ and an $n$-tuple $(f_0, \dots , f_{n-1}) $ in $\ck(G,K)$ we form the morphism $[f_i] \colon n. G\to K$, and  define the result of $\sigma_{EK}$ in our $n$-tuple as the following composite
$$
\sigma_{EK}(f_i) \equiv G \xrightarrow{\ \sigma\ } n . G \xrightarrow{\ [f_i]\ } K\,.
$$
To every morphism $h\colon K \to L$ we assign the homomorphism
$$
Eh = \ck (G,h)
$$
of post-composition with $h$. Then $Eh$ is clearly monotone.
 Preservation of $\sigma\colon G \to n . G$ is clear:
 $$
 Eh\big(\sigma_{EK} (f_i)\big) = h\cdot [f_i]\cdot \sigma = [h\cdot f_i] \cdot \sigma = \sigma_{EL} (h\cdot f_i)\,.
$$

\vskip 1mm
(2a)
$E$ is a fully faithful functor.  To prove  that it is full, let $k\colon EK\to EL$ be a homomorphism. By Theorem~\ref{P1} it is sufficient to verify the naturality of the following transformation
$$
\xymatrix@R=.21pc{
&n .G \ar[r]^{[f_i]} & K&\\
\ar@{-}[rrr]&&&\\
&n . G  \ar[r]^{\ k(f_i)} &L&
}
$$
That is, we need to prove the following implication for all morphisms $u\colon n . G\to m. G$ ($
n, m\in \mathbb{N}$):\\
\indent\qquad\quad 
$
\xymatrix@C=.5pc{
n. G  \ar[rd]_{[f_i]} \ar[rr]^{u} \ar@{}[drr]|{\circlearrowright}&& m. G \ar[ld]^{[g_j]}\\
& K &
}
$\quad\vskip-1cm \hskip 5cm$\Rightarrow$
\quad \vskip-1cm\hskip6cm
$
\xymatrix@C=.5pc{
n. G  \ar[rd]_{[k(f_i)]} \ar[rr]^{u} \ar@{}[drr]|{\circlearrowright}&& m. G \ar[ld]^{[k(g_j)]}\\
& L &
}
$

From this it follows that there exists $h\colon K\to L$ with $h\cdot [f_i]= [ k(f_i)]$ for all $[f_i] \colon n . G\to K$. The case $n=1$ then yields
$$
h\cdot f= k(f) \quad \mbox{for all} \quad f\colon G\to K
$$
in other words, $Eh =k$, as desired.

The above implication is clear if $n=1$: here $u$ is an $m$-ary operation symbol in $\Sigma$  and $f_0 =[g_j]\cdot u = u_{EK}(g_j)$. Since $k$ is a homomorphism, we deduce
$$
k(f_0) = u_{EL} \big(k(g_j)\big) = \big[k(g_j)\big]\cdot u\,.
$$
For $n>1$ that implication follows by considering the $n$ components of $u$ separately.

%%%%%%%%%%%%%%%%%

To prove that $E$ is faithful, that is given $k_0$, $k_1\colon K\to L$ with $Ek_0 \leq Ek_1$, we conclude $k_0\leq k_1$, use that fact that 
by Lemma \ref{L:ca}
the canonical morphism $c_K \colon \coprod\limits_{\ck(G,K)} G\to K$ is a subregular  epimorphism. %This was established in Step (1) of the proof of Theorem~\ref{P1}.

\vskip 1mm
(2b) $E$ preserves limits, filtered colimits, and reflexive coinserters. In fact, if $U\colon \Sigma\mbox{-}\Alg\to \Pos$ denotes the forgetful functor, then
$$
U\cdot E= \ck (G, -) \colon \ck \to \Pos\,.
$$
$U$ creates limits, filtered colimits and reflexive coinserters by Example~\ref{E:var}. Since $\ck(G,-)$ preserves all those three types of constructions, so does $E$.

\vskip 1mm
(3) $\ck$ is equivalent to a variety. For that denote by $\bar \ck$ the closure of the image of $E$ under isomorphism in $\Sigma\mbox{-}\Alg$. From (2a) we know that $\ck$ is equivalent to $\bar \ck$. We now use the Birkhoff Variety Theorem~\ref{BVT} to prove that $\bar \ck$ is a variety.

\vskip 1mm
 (3a) $\bar \ck$  is closed under products because $\ck$ has products by Proposition \ref{P:lim} and $E$ preserves them.
 
 \vskip 1mm
 (3b) $\bar \ck$  is closed under subalgebras. It is sufficient to prove this for finitely generated subalgebras. Indeed, $\bar \ck$ is closed under directed colimits (since $\ck$ has them and $E$ preserves them), and every subalgebra is a directed colimit of finitely generated  subalgebras.
 
 Thus our task is, for every object $K$ of $\ck$ and every finite subposet $m\colon M\hookrightarrow \ck(G,K)$, to prove that the least subalgebra of $EK$ containing $m$ lies in $\bar \ck$. Factorize $\hat  m\colon M\otimes G\to K$ as a  subregular epimorphism $c$ followed  by an embedding $u$ :
 $$
\xymatrix{
 M\otimes G\ar[rr]^{\hat m} \ar@{->>}[rd]_{c} && K\\
& L\ar@{>->}[ur]_{u}&
}
$$
We will prove that $Eu$ represents the least subalgebra  containing $m$. That this is a subalgebra of $EK$ is clear:
we know that $Eu$ is a monotone homomorphism, and we have: $Eu(x_0)\leq Eu(x_1)$ implies $x_0\leq x_1$ because $u$ is an embedding with $u\cdot x_0\leq u\cdot x_1$.

We verify that every subalgebra $B$ of $K$ containing $M$:
$$
M\subseteq UB \subseteq \ck(G,K) 
$$
also contains the image of $Eu$. That is:
$$
u\cdot z\in B\quad \mbox{for all}\quad z\colon  G\to L\,.
$$
We know that
since $G$ is a subeffective projective, 
 $\ck(G, c)$ is surjective, thus, $z$ factorizes as $z=c\cdot z'$ for some $z'\colon G\to M\otimes G$. We next use that $\ck(G,-)$ preserves the following reflexive coinserter
$$
\xymatrix@1@C=4pc{
M^{(2)}. G \ar@<.8ex> [r]^{\pi_0. G}
\ar@<-.8ex>[r]_{\pi_1. G}\  &\  |M|. G  \ar[r]^{k_M \otimes G}\  &\  M\otimes G
}
$$
of Example~\ref{R2}. Thus, $z'\colon G\to M\otimes G$ factorizes through $k_M \otimes G$ via an operation
$$
\sigma \colon G \to |M| \otimes G
$$
of arity card $|M|$:
 $$
\xymatrix@C=1pc{
&&&& \\
|M|. G \ar@/_-2pc/[rrrr]^{\hat m_0}\ar[rr]^{k_M\otimes G} &&M\otimes G \ar[rr]^{\hat m} \ar[dr]^{c} && K\\
& G\ar[ul]^{\sigma} \ar[ur]_{z'} \ar[rr]_{z}
&&  L \ar[ur]_{u}
}
$$
The morphism $m_0 = m\cdot k_M \colon |M| \to \ck(G,K)$ fulfils $\hat m_0 =\hat m \cdot k_M\otimes G$ by Remark~\ref{R}(2), therefore,
$$
u\cdot z = \hat m_0 \cdot \sigma =\sigma_{EK}(m_0)\,.
$$
Since $B$ is closed under $\sigma_{EK}$ and  contains $m_0$, this proves $u\cdot z\in B$.

\vskip 1mm
 (3c) $\bar {\ck}$ is closed under quotient algebras. Let $e\colon EK \to A$ be a surjective homomorphism. Form its subkernel  pair  $u_0$, $u_1\colon Z\to UEK$ in $\Pos$. Then $Z$ is closed in $UEK \times UEK \simeq UE(K\times K)$ under operations.
Indeed, let $\sigma$ be an  $n$-ary operation. If an $n$-tuple $f_0, \dots , f_{n-1} \colon G\to K\times K$ lies in $Z$, more precisely, it factorizes through $\langle \pi_0, \pi_1\rangle \colon Z \hookrightarrow UE (K\times K)$, we have $f_i =\langle \pi_0, \pi_1\rangle. g_i$ from which we deduce that $\sigma_{E(K\times K} (f_i)$ lies in $Z$:
$$
\sigma_{E(K\times K} (f_i) = [f_i] .\sigma =\langle u_0, u_1\rangle \cdot  [g_i]\cdot \sigma\,.
$$
 By (3a) and (3b) there exists a subobject $m=[m_0, m_1] \colon M \to K\times K$ with $Em_0$, $Em_1$ forming the prekernel pair of $e$. Since  $m_0$, $m_1$ is a reflexive pair, $E$ preserves its coinserter $k\colon K\to L$. Thus $A\simeq EL$ lies in $\bar{\ck}$.
 
\end{proof}

\section{Varieties as Free Completions} 

The aim of this section is to prove a parallel result to Theorem~\ref{T:sif}: varieties of ordered algebras are precisely  the free completions of $\ct^{\op}$ under sifted (weighted) colimits, where $\ct$ ranges over  discrete Lawvere theories. This latter concept was introduced by Power \cite{P} for  algebras of countable arities. The corresponding finitary variant uses $\cn$ of Notation~\ref{N:N}, now considered  as a trivially enriched category: all hom-sets are discrete.

Recall that all categories, functors etc. are enriched over $\Pos$.

\begin{defi}[{\cite{P}}]\label{5.1}
 A (finitary) \textit{discrete Lawvere theory} is a small enriched category $\ct$  with specified finite products together with a functor $I\colon \cn^{\op} \to \ct$ which is identity on objects and strictly  preserves finite products.
 \end{defi}
 
 The category $\Mod \ct$ of models is now defined analogously to the ordinary case: it consists of (enriched) functors $A\colon \ct \to \Pos$ preserving finite products and  natural transformations.
 
% Given another discrete Lawvere theory $I'\colon \cn^{\op} \to \ct'$, a \textit{morphism of theories} is a functor $H\colon \ct \to \ct'$ strictly preserving finite products  such that $I' = HI$. 
 
\begin{example} The discrete Lawvere theory $\ct_{\cv}$ associated to a variety of ordered algebras  $\cv$ has as objects natural numbers, the hom-poset $\ct_{\cv}(n, 1)$ is the underlying poset of the free algebra of $\cv$ on $n$, and $\ct_{\cv} (n, k) = \ct_{\cv}(n, 1)^k$. Thus $n$ is the power of $1$: the projection $\pi_i \colon n\to 1$ corresponds to $i$ as an element of the free algebra on $n$.
\end{example}

Every algebra $A$ of $\cv$ defines a model $\widehat A$ of $\ct_{\cv}$: to each $n$ it assigns the underlying poset of $A^n$. To every morphism $f\in \ct_{\cv} (n, 1)$ it assigns the monotone  map $\widehat f\colon A^n\to A$ which, given an $n$-tuple $h\colon n\to A$, extends it to the unique homomorphism $h^\sharp  \colon \ct_{\cv}(n, 1) \to A$ and yields
$$
\widehat f (h) = h^\sharp (f)\,.
$$

We now turn to sifted colimits in the enriched setting (cf.~Definition~\ref{D:sif}) following  the dissertation of Bourke \cite{B} (where the base category of categories was considered) and the paper \cite{KV} in which the appropriate adaptation  to $\Pos$ was made explicit:
 
\begin{defi}[{\cite{B}}]%5.3 
 A weight $W\colon \cd^{\op} \to \Pos$ is called \textit{sifted\/}  if colimits of diagrams in $\Pos$  weighted by $W$ commute with finite products: given diagrams $D_1$, $D_2\colon \cd \to \Pos$, the canonical morphism $\colim_W(D_1\times D_2) \to (\colim_W D_1) \times (\colim_W D_2)$ is an isomorphism.

 Weighted colimits in a category are called \textit{sifted colimits} if the weight is sifted.
 \end{defi}
 
 \begin{example}\label{E:fil}  
 (1) Filtered colimits (those where $\cd$ has a filtered underlying category) are sifted.
 
 (2) Reflexive coinserters are sifted colimits \cite{ADV}.
 
 (3) Also split coequalizers are sifted colimit. This follows from
  \cite{LR} 1.3, because split coequalizers are absolute colimits (i.e. every functor preserves split coequalizers). This can be directly verified as follows. Let
$$ 
	\xymatrix@=4pc{
		&  
		A\ar@<0.5ex>[r]^{d_0}
		\ar@<-0.5ex>[r]_{d_1}& B  \ar[r]^{e} & C
	}
	$$
be a coequalizer split by $t:B\to A$ and $s:C\to B$. That is $es=\id_C$, $d_0t=\id_B$ and $d_1t=se$. We have
$$
d_1td_1=sed_1=sed_0=d_1td_0.
$$
Conversely, having $t\colon B\to A$ such that $d_0t=\id_B$ and $d_1 t d_0=d_1 t d_1$, we get
a unique $s\colon C\to B$ such that $se=d_1t$.

Hence $C$ is a colimit of the filtered diagram consisting of $d_0,d_1$ and $t$.

\end{example}

 \begin{remark}\label{R5.5}%5.7
(1) Analogously to the ordinary categories (see Example~\ref{R:sif}), the first  two cases  above are in a way exhaustive. For example, an endofunctor of $\Pos$ preserves sifted colimits iff it preserves filtered colimits and reflexive coinserters (\cite{B}, 8.45).

(2)
For every variety $\ck$ the forgetful functor $U\colon \ck\to \Pos$ preserves 
(indeed: creates) filtered colimits. This is easy to verify. 
\end{remark}

\begin{remark}\label{monad}
Following \cite[Theorem 4.5]{ADV}, varieties of ordered algebras are, up to concrete isomorphism,  precisely categories
of algebras $\Pos^{T}$  for enriched monads  $T$ on $\Pos$ preserving sifted colimits. Moreover, these  are up to equivalence precisely the categories of models of discrete Lawvere theories: see Theorem 7.7 cf \cite{LR}.
(That result is more general, dealing with a closed category $\cv$ and a class $\phi$ of limits. Applying it to $\cv =\Pos$ and $\phi=$ finite products yields the above special case.)

The equivalence of op.cit. assigns to every variety $\cv$ the theory $\ct_{\cv}$: it  turns out that all  models of $\ct_{\cv}$ are naturally equivalent to $\widehat A$  for algebras $A\in \cv$.
\end{remark}

%20 1/2
\begin{remark} %5.7
Let $\ck$ be an enriched  category.
By its \textit{free completion under sifted colimits\/} is meant an enriched  category $\Sind \ck$ with sifted colimits containing $\ck$ as a full subcategory  having  the expected universal property: for every functor $F\colon \ck \to \cl$ where  $\cl$  has sifted colimits there exists an extension to $\Sind \ck$ preserving sifted colimits, unique up to natural isomorphism.

It follows that the functor category $[\ck, \cl]$ is equivalent (via the domain-restriction functor) to the full subcategory of $[\Sind \ck, \cl]$ formed by functors preserving sifted colimits.
\end{remark}
\begin{defi}
An object $A$ of $\ck$ is called 
\textit{perfectly presentable} if its hom-functor $\ck(A,-):\ck\to\Pos$ preserves sifted
colimits.
\end{defi}

\begin{remark}\label{R5.9}
By Example~\ref{E:fil} 
every perfectly presentable object is finitely presentable and a subeffective projective.
\end{remark}

\begin{lemma}\label{perfect}
Perfectly presentable objects are closed under finite coproducts and retracts.
\end{lemma}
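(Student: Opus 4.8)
The plan is to prove the two closure properties separately, in each case reducing to the defining property of a perfectly presentable object (its hom-functor preserves sifted colimits) together with the characterisation of a sifted weight as one for which weighted colimits commute with finite products in $\Pos$.

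For finite coproducts, the essential observation is that the hom-functor out of a coproduct is the finite product of the hom-functors: the universal property of the coproduct gives, for objects $A$ and $B$, a natural poset isomorphism $\ck(A+B,-)\cong \ck(A,-)\times\ck(B,-)$ in $[\ck,\Pos]$. Now fix a sifted colimit $\colim_W D$, so that $W$ is a sifted weight. Chaining isomorphisms,
$$
\ck(A+B,\colim_W D)\cong \ck(A,\colim_W D)\times\ck(B,\colim_W D)\cong \big(\colim_W \ck(A,D-)\big)\times\big(\colim_W \ck(B,D-)\big),
$$
using perfect presentability of $A$ and $B$, and then, since $W$ is sifted,
$$
\big(\colim_W \ck(A,D-)\big)\times\big(\colim_W \ck(B,D-)\big)\cong \colim_W\big(\ck(A,D-)\times\ck(B,D-)\big)\cong \colim_W \ck(A+B,D-).
$$
One checks this composite is exactly the canonical comparison morphism, so $A+B$ is perfectly presentable. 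The nullary case is the empty-product instance of the same principle: the initial object $0$ has $\ck(0,-)\cong 1$ constant at the terminal poset, and the empty-product case of siftedness gives $\colim_W 1\cong 1$, so $\ck(0,-)$ preserves sifted colimits. An easy induction then covers all finite coproducts.

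For retracts, suppose $r\colon A\to B$ and $s\colon B\to A$ satisfy $r\cdot s=\id_B$ with $A$ perfectly presentable. Applying the contravariant functor $\ck(-,X)$ yields, naturally in $X$, monotone maps $\ck(r,X)\colon \ck(B,X)\to\ck(A,X)$ and $\ck(s,X)\colon \ck(A,X)\to\ck(B,X)$ whose composite is $\ck(r\cdot s,X)=\id$. Thus $\ck(B,-)$ is a retract of $\ck(A,-)$ in $[\ck,\Pos]$. For any sifted colimit $\colim_W D$, naturality of this retraction in the object variable exhibits the comparison morphism for $B$ as a retract, in the arrow category of $\Pos$, of the comparison morphism for $A$; the latter is an isomorphism because $A$ is perfectly presentable, and a retract of an isomorphism is again an isomorphism. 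Hence $\ck(B,-)$ preserves sifted colimits, so $B$ is perfectly presentable.

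The routine part is the bookkeeping that identifies the chains of isomorphisms above with the actual comparison morphisms; the only point requiring slight care is the nullary coproduct, where one must invoke the empty-product instance of siftedness (equivalently, that a sifted weight satisfies $\colim_W 1\cong 1$) rather than the binary one. The retract step rests solely on the elementary fact, applied in $\Pos$, that the class of isomorphisms is closed under retracts in the arrow category.
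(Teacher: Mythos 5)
Your proof is correct and is essentially the argument the paper itself invokes: the paper disposes of the lemma by citation (finite coproducts ``analogous to \cite{LR}, 5.6'', retracts via \cite{K}, 5.25), and your chains of isomorphisms --- $\ck(A+B,-)\cong\ck(A,-)\times\ck(B,-)$ combined with siftedness of the weight, and the retract-of-an-isomorphism argument for the comparison morphisms --- are exactly the standard content behind those references. Your explicit treatment of the nullary coproduct via the empty-product instance of siftedness ($\colim_W 1\cong 1$) is a point the cited sources cover but the paper's definition displays only in the binary case, so spelling it out is a welcome, and correct, piece of care.
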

\begin{proof}
The proof of the first claim is analogous to \cite{LR}, 5.6. The second statement is easy.
\end{proof}

\begin{prop}\label{perfect1}
Let $\ck$ be a variety of ordered algebras. The following properties of an arbitrary  algebra $A$ of $\ck$
are equivalent:
\begin{enumerate}
\item $A$ is perfectly presentable,
\item $A$ is finitely presentable and  a subregular projective, and
\item $A$ is a retract of a free algebra on a finite discrete poset.
\end{enumerate}
\end{prop}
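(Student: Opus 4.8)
The plan is to prove the cyclic chain of implications $(1) \Rightarrow (2) \Rightarrow (3) \Rightarrow (1)$. The implication $(1) \Rightarrow (2)$ is essentially immediate from Remark~\ref{R5.9}: a perfectly presentable object preserves all sifted colimits, hence in particular filtered colimits (so it is finitely presentable) and reflexive coinserters; by Remark~\ref{R4.6} preservation of reflexive coinserters gives subregular projectivity once finite copowers exist, which they do in a variety.

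For $(3) \Rightarrow (1)$ I would argue in two movements. First, by Example~\ref{E:var}(2) the free algebra $P \otimes G$ on a finite discrete poset $P = n$ is the finite copower $n \otimes G$; since $G$ itself is perfectly presentable (its hom-functor is the forgetful functor $U \colon \cv \to \Pos$, which preserves sifted colimits by the definition of a variety together with Remark~\ref{R5.5}), Lemma~\ref{perfect} shows that finite coproducts of $G$ are again perfectly presentable. Second, a retract of a perfectly presentable object is perfectly presentable by the retract-closure half of Lemma~\ref{perfect}. Composing these two closure properties yields $(1)$.

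The genuinely substantive implication is $(2) \Rightarrow (3)$, and this is where I expect the main obstacle to lie. The strategy is to write the given object $A$ as a canonical quotient of a finite copower of $G$ and then split that quotient. Concretely: since $A$ is finitely presentable, I would first exhibit a subregular (equivalently, surjective) epimorphism $e \colon n \otimes G \to A$ from a \emph{finite} copower---using finite presentability to bound the number of generators, together with the fact that $G$ is a strong generator so that $A$ is a canonical quotient of $\ck(G,A) \otimes G$ which one then restricts to a finite subcopower via abstract finiteness. Because $A$ is a subregular projective, its hom-functor preserves the subregular epimorphism $e$, so the identity $\id_A \colon A \to A$ lifts through $e$ to a morphism $s \colon A \to n \otimes G$ with $e \cdot s = \id_A$. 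This section $s$ exhibits $A$ as a retract of the free algebra $n \otimes G$ on the finite discrete poset $n$, giving $(3)$.

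The delicate point to handle carefully is producing the epimorphism $e$ out of a \emph{finite} copower rather than the full copower $\ck(G,A) \otimes G$: one must combine finite presentability (to control the arity) with abstract finiteness of $G$ in the sense of Definition~\ref{D3.15}, so that the canonical morphism $c_A \colon \ck(G,A) \otimes G \to A$ factors, after restricting to a suitable finite subposet, through a finite free algebra whose structure map to $A$ is still surjective. I would verify surjectivity by noting that the image already contains a generating set, invoking closure of $\bar\ck$ under finitely generated subalgebras as in step (3b) of the proof of Theorem~\ref{T:main}. Once $e$ is in hand the splitting is a one-line application of subregular projectivity, so the essential work is entirely in the construction of $e$.
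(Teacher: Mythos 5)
Your proposal is correct and follows essentially the same route as the paper's proof: (1)$\Rightarrow$(2) via Example~\ref{E:fil} and Remark~\ref{R4.6}; (3)$\Rightarrow$(1) from $U$ preserving sifted colimits together with the closure properties of Lemma~\ref{perfect}; and (2)$\Rightarrow$(3) by producing a surjective homomorphism $e\colon n\otimes G\to A$ from a free algebra on a finite discrete poset, noting that surjections in a variety are subregular epimorphisms (coinserters of their subkernel pairs), and splitting $e$ by subregular projectivity of $A$. The one divergence is minor and in your favor to drop: your appeal to abstract finiteness of $G$ in constructing $e$ is superfluous --- finite presentability of $A$ alone gives finite generation, hence a surjection $FP\to A$ for a finite subposet $P\subseteq A$, and the paper passes to the discrete case not by restricting the canonical morphism $c_A$ but by precomposing with the surjection $Fk_p\colon F|P|\to FP$ obtained from the canonical coinserter of Example~\ref{R2}, which is exactly the ``generating set'' observation you sketch, made precise without Definition~\ref{D3.15}.
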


\begin{proof}
(1) $\Rightarrow$ (2) follows from Example~\ref{E:fil}.

(2) $\Rightarrow$ (3): Since $A$ is finitely presentable,
there is a finite subposet $P$ of $A$ such that for the free-algebra functor $F\colon \Pos \to \ck$ a surjective homomorphism $e\colon FP\to A$ exists.
The canonical  coinserter of  $P$ (Example~\ref{R2}) is preserved by $F$, thus, $Fk_p \colon F|P| \to FP$ is also surjective. As in Example~\ref{E:subker} (2), we can prove that surjective homomorphisms are coinserters of their  subkernel pairs. Thus $e\cdot Fk_p \colon F|P| \to A$ is a subregular epimorphism. Since $A$ is a subregular projective, this implies that $e\cdot F k_p$ is a split epimorphism. Thus $A$ is a retract of $F|P|$.

(3) $\Rightarrow$ (1): Since by Remark~\ref{R5.5} the forgetful functor $U:\ck\to\Pos$ preserves sifted colimits,
its left adjoint $F:\Pos\to\ck$ preserves perfectly presentable objects. Thus (1) holds due to Remark \ref{R4.6}.
\end{proof}

The following theorem is due to Kurz and Velebil (\cite{KV}, 6.9 and 6.12). We present  a full proof because it is simpler than that in op.cit.

\begin{theorem}\label{perfect2}
Let $\ck$ be a variety of ordered algebras and $\cp$ its full subcategory on free algebras
on finite discrete posets. Then $\ck =\Sind \cp$. 
\end{theorem}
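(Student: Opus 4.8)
The plan is to realize $\ck$ as the essential image of the restricted Yoneda embedding (nerve) $E\colon \ck \to [\cp^{\op}, \Pos]$, $K \mapsto \ck(-,K)|_{\cp^{\op}}$, and to identify that image with $\Sind\cp$, the latter realized (following \cite{B}) as the closure of the representables under sifted colimits inside $[\cp^{\op}, \Pos]$. First I would record that $\cp$ is, up to equivalence, the full subcategory of all finite copowers $n\otimes G = \coprod_n G$ of the free algebra $G$ on one generator, so that by Theorem~\ref{P1} --- whose hypotheses hold for a variety, since $G$ is an abstractly finite subregularly projective strong generator by Example~\ref{E:var} and Remark~\ref{R4.6}, and $\ck$ has prekernel pairs and reflexive coinserters --- the subcategory $\cp$ is dense. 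Density yields at once that $E$ is fully faithful, this being exactly conditions (a) and (b) of Remark~\ref{R4}.

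Next I would verify that $E$ preserves and reflects sifted colimits. Each object $n\otimes G$ of $\cp$ is perfectly presentable: $G$ is, because its hom-functor is the forgetful $U$, which preserves sifted colimits by Remarks~\ref{R5.5} and \ref{monad}, and finite coproducts of perfectly presentable objects are again perfectly presentable by Lemma~\ref{perfect}. Since sifted colimits in $[\cp^{\op}, \Pos]$ are computed pointwise and $\mathrm{ev}_{n\otimes G}\circ E = \ck(n\otimes G, -)$, the functor $E$ preserves sifted colimits, which exist in $\ck$ by Proposition~\ref{P:lim}; being fully faithful it also reflects them. Consequently the essential image of $E$ is a replete full subcategory of $[\cp^{\op}, \Pos]$ containing every representable and closed under sifted colimits, whence $\Sind\cp$ is contained in it.

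The reverse inclusion is the heart of the argument: I must show that every algebra $K$ is a sifted colimit of objects of $\cp$, so that $EK$ is a sifted colimit of representables and hence lies in $\Sind\cp$. For this I would use the monadic presentation supplied by Remark~\ref{monad}: writing $F\dashv U$ for the free/forgetful adjunction with $FP = P\otimes G$, each algebra is the reflexive coinserter of the canonical pair $FUFUK \rightrightarrows FUK$, a sifted colimit that $U$ preserves because the monad preserves reflexive coinserters. It then remains to exhibit each free algebra $P\otimes G$ on an \emph{arbitrary} poset $P$ as a sifted colimit of objects of $\cp$: every set is the filtered colimit of its finite subsets, and every poset $P$ is the reflexive coinserter of the discrete posets $P^{(2)} \rightrightarrows |P|$ of Example~\ref{R2}; applying the left adjoint $-\otimes G$ (Proposition~\ref{P}), which preserves these colimits, presents $P\otimes G$ as a reflexive coinserter of filtered colimits of the $n\otimes G$.

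Finally I would assemble these presentations using the fact that a sifted colimit of sifted colimits is again sifted, so that the iterated presentation of $K$ collapses to a single sifted diagram valued in $\cp$; here Example~\ref{E:fil} guarantees that filtered colimits, reflexive coinserters and split coequalizers are all sifted, and passing to retracts is harmless because by Proposition~\ref{perfect1} the perfectly presentable objects are precisely the retracts of the $n\otimes G$, and retracts are split coequalizers, hence sifted colimits already living in $\Sind\cp$. The step I expect to be the main obstacle is exactly this last assembly: verifying that the monadic reflexive-coinserter presentation of $K$ and the filtered-colimit/reflexive-coinserter presentation of free algebras on posets combine into one sifted diagram, i.e.\ controlling the closure of sifted colimits under iteration in the $\Pos$-enriched setting, for which I would rely on the stability results of \cite{B}.
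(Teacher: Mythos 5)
Your overall scaffolding (nerve embedding into $[\cp^{\op},\Pos]$, density via Theorem~\ref{P1}, perfect presentability of the objects $n\otimes G$) is sound but heavier than needed; the genuine gap is in the step you rely on for the crucial inclusion. Your claim that ``each algebra is the reflexive coinserter of the canonical pair $FUFUK\rightrightarrows FUK$'' is false. The pair $\varepsilon_{FUK},\ FU\varepsilon_K$ is indeed reflexive (split by $F\eta_{UK}$) and $\varepsilon_K$ is its \emph{conical coequalizer}, but its \emph{coinserter} only inserts the inequality $q\cdot\varepsilon_{FUK}\leq q\cdot FU\varepsilon_K$ instead of forcing equality, and in the ordered world these differ. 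Concretely, in the variety of pointed posets (one constant, no inequations) let $K$ have distinguished element $a_0$; then $FUK$ is $UK$ plus a fresh isolated distinguished point $c$, and testing the coinserter condition on generators (two homomorphisms out of the free algebra $FUFUK$ compare iff they do on generators) shows that the coinserter of the canonical pair is the universal quotient inserting only $c\leq a_0$: it is $UK$ with an extra distinguished point $c<a_0$ adjoined, not $K$. Nor can you fall back on $K$ being the reflexive \emph{coequalizer} of this pair, since conical reflexive coequalizers are not among the $\Pos$-enriched sifted colimits (note their absence from Example~\ref{E:fil}); this is precisely why the paper's step (3) instead invokes the split structure of the canonical presentation and Example~\ref{E:fil}(3), by which split coequalizers are filtered, hence sifted, colimits.

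Two further remarks. First, a repair consistent with your coinserter-based plan does exist: surjective homomorphisms in a variety are coinserters of their subkernel pairs (see the proof of Proposition~\ref{perfect1}); taking the subkernel pair $p_0,p_1\colon P\to FUK$ of $\varepsilon_K$ and precomposing with the surjection $\varepsilon_P\colon FUP\to P$ does not change the coinserter, because subregular epimorphisms are order-epimorphisms (Example~\ref{E:pos}(1)), and the resulting pair $p_0\varepsilon_P,\ p_1\varepsilon_P\colon FUP\rightrightarrows FUK$ is reflexive (a common section is obtained by extending the diagonal $\delta\colon FUK\to P$, which exists by Definition~\ref{D3.4}, to a homomorphism $FUK\to FUP$); this genuinely exhibits $K$ as a reflexive coinserter of free algebras on arbitrary posets, which your filtered-colimit and canonical-coinserter steps then reduce to $\cp$. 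Second, the obstacle you flag as the heart of the matter --- collapsing the iterated presentations into a single sifted diagram --- is a non-issue, and with it most of your nerve-embedding machinery: the paper reduces the theorem, via \cite[Proposition 4.2]{KS} together with Proposition~\ref{perfect1} (the objects of $\cp$ are perfectly presentable), to showing that $\ck$ is the \emph{closure} of $\cp$ under sifted colimits, and a closure is closed under iteration by definition, so the three presentations (finite posets as reflexive coinserters of finite discrete ones as in Example~\ref{R2}, arbitrary free algebras as filtered colimits of finitely generated ones, and arbitrary algebras via the canonical presentation) may simply be stacked without any amalgamation argument.
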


\begin{proof}
Following \cite[Proposition 4.2]{KS} and \ref{perfect1}, all we have to show is that $\ck$ is the closure
of $\cp$ under sifted colimits.
 
(1) A finite poset $P$ is a reflexive coinserter as in Example~\ref{R2}. This yields the following reflexive coinserter in $\ck$
$$ 
	\xymatrix@=4pc{
		&  
		FP^{(2)}\ar@<0.6ex>[r]^{F\pi_1}
		\ar@<-0.6ex>[r]_{F\pi_0}& F|P|  \ar[r]^{F k_p} & FP
	}
	$$
	with $FP$ in $\cp$.

(2) A free algebra on an arbitrary  poset $X$ is a filtered colimit of free algebras over finite posets. This follows from the fact that the free-algebra functor from $\Pos$ to $\ck$ preserves colimits:

(3) 
Finally, every algebra $A$ in $\ck$  is a split coequalizer of free algebras via its canonical
presentation
$$ 
	\xymatrix@=4pc{
		&  
		FUFUA\ar@<0.5ex>[r]^{\varepsilon_{FUA}}
		\ar@<-0.5ex>[r]_{FU\varepsilon_A}& FUA  \ar[r]^{\varepsilon_A} & A
	}
	$$
Following Example~\ref{E:fil}, this is a filtered colimit.	
\end{proof}

\begin{remark}\label{R5.13}
A concrete category over $\Pos$ is a  category $\ck$ together with a faithful (enriched) `forgetful' functor $U\colon \ck \to \Pos$. 

Given  concrete categories $(\ck, U)$ and $(\ck^\prime, U^\prime)$, they are (concretely) \textit{equivalent} if there exists an equivalence functor $E\colon\ck \to \ck^\prime$ with $U=U^\prime E$. Analogously, they are \textit{isomorphic} if $E$ is an isomorphism.
\end{remark}

\begin{theorem}\label{th5.14}
The following statements are equivalent for  an enriched category $\ck$ up to concrete equivalence:
\begin{enumerate}
\item $\ck$ is a variety of ordered algebras,
\item $\ck=\Sind  \ct^{\op}$  for a discrete
Lawvere theory $\ct$, and
\item $\ck = \Mod \ct$ for a discrete Lawvere theory $\ct$.
\end{enumerate}
\end{theorem}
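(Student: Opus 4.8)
The plan is to prove the cycle of implications $(1)\Rightarrow(2)\Rightarrow(3)\Rightarrow(1)$, tracking the forgetful functors at each stage so that the equivalences are concrete; since all three statements are existential in $\ct$, a cycle suffices to establish their mutual equivalence.

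For $(1)\Rightarrow(2)$ I would invoke Theorem~\ref{perfect2}: a variety $\ck$ equals $\Sind\cp$, where $\cp$ is its full subcategory of free algebras on finite discrete posets. It then remains to identify $\cp$ with the dual of a discrete Lawvere theory. Writing $Fn$ for the free algebra on the finite discrete set $n$ and $U$ for the forgetful functor, the enriched hom-posets are
$$
\cp(Fn, Fk) = \ck(Fn, Fk) \cong \Pos(n, UFk) = \ct_{\cv}(k,1)^n = \ct_{\cv}(k,n),
$$
using the free--forgetful adjunction, the discreteness of $n$, and the definition of the associated theory $\ct_{\cv}$. Since $\ct_{\cv}(k,n)=\ct_{\cv}^{\op}(n,k)$, the identity-on-objects assignment $Fn\mapsto n$ is an isomorphism $\cp\cong\ct_{\cv}^{\op}$, whence $\ck=\Sind\cp=\Sind\ct_{\cv}^{\op}$, which is $(2)$ with $\ct=\ct_{\cv}$.

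The crux is $(2)\Rightarrow(3)$, namely the identification $\Sind\ct^{\op}\simeq\Mod\ct$ for an arbitrary discrete Lawvere theory $\ct$. I would argue as follows. Because a sifted weight makes weighted colimits commute with finite products in $\Pos$, the pointwise colimit in $[\ct,\Pos]$ of a sifted-weighted diagram of finite-product-preserving functors again preserves finite products; hence $\Mod\ct$ is closed under sifted colimits in $[\ct,\Pos]$, and these are computed pointwise. The enriched Yoneda embedding $n\mapsto\ct(n,-)$ lands in $\Mod\ct$, since representables preserve all limits, giving a full embedding $\ct^{\op}\hookrightarrow\Mod\ct$; by the Yoneda lemma $\Mod\ct(\ct(n,-),-)\cong\mathrm{ev}_n$, which preserves sifted colimits because these are pointwise, so every representable is perfectly presentable. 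The technical heart is to show that $\Mod\ct$ is the closure of the representables under sifted colimits, i.e.\ that every model is a sifted colimit of representables; granting this, the universal-property characterization of free sifted-colimit completions (\cite{KS}, Prop.~4.2, in the enriched form of \cite{B}) yields $\Mod\ct\simeq\Sind\ct^{\op}$. I expect this density step to be the main obstacle.

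Finally, $(3)\Rightarrow(1)$ is Remark~\ref{monad}: $\Mod\ct$ is the category of algebras of an enriched monad on $\Pos$ preserving sifted colimits (\cite{LR}, Thm.~7.7), hence a variety (\cite{ADV}, Thm.~4.5). It remains to check concreteness around the cycle. On $\Mod\ct$ the forgetful functor is $\mathrm{ev}_1\cong\Mod\ct(\ct(1,-),-)$, which under the equivalence of $(2)\Rightarrow(3)$ corresponds to the hom-functor of the generator $1\in\ct^{\op}\subseteq\Sind\ct^{\op}$; on the variety side the model $\widehat A$ satisfies $\widehat A(1)=UA$. Thus all three forgetful functors agree, and by Remark~\ref{monad}(2) the variety recovered from $\ct_{\cv}$ is concretely $\cv$ itself, so the equivalences are concrete as required.
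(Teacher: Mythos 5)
Your step $(2)\Rightarrow(3)$ contains a genuine gap, and it is exactly the one you flag yourself: you never prove that every model of $\ct$ is a sifted colimit of representables. The surrounding observations are sound (sifted weights commute with finite products in $\Pos$, so $\Mod\ct$ is closed under pointwise sifted colimits in $[\ct,\Pos]$; the Yoneda embedding lands in $\Mod\ct$; evaluation functors show representables are perfectly presentable), but the density claim is the entire content of the implication, and nothing in your outline delivers it. Note that the paper never proves this density abstractly either: it orders the implications differently so as to get it for free. The paper first establishes $(1)\Leftrightarrow(3)$ via Remark~\ref{monad} (\cite{ADV}, Theorem~4.5, plus \cite{LR}, Theorem~7.7), and then, for $(2)\Rightarrow(1)$, uses \cite{LR}, Theorem~7.7, again to identify $\ct^{\op}$ with the full subcategory of \emph{free algebras on finite discrete posets} inside the variety $\Mod\ct$; Theorem~\ref{perfect2} applied to that variety then gives $\Sind\ct^{\op}=\Mod\ct$ (with \cite{KS}, Proposition~8.1, supplying the inclusion $\Sind\ct^{\op}\subseteq\Mod\ct$). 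The density of the representables is thus obtained concretely in the proof of Theorem~\ref{perfect2} --- every algebra is a split coequalizer of free algebras via its canonical presentation (a filtered colimit by Example~\ref{E:fil}(3)), free algebras on arbitrary posets are filtered colimits of those on finite posets, and $FP$ is a reflexive coinserter of $F|P|$ --- rather than by a density argument in $[\ct,\Pos]$.

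The repair is available from your own ingredients: your $(3)\Rightarrow(1)$ is logically independent of the other implications, so prove it first and then use the variety structure of $\Mod\ct$ together with Theorem~\ref{perfect2} to close the hole in $(2)\Rightarrow(3)$; this reproduces the paper's argument. The rest of your proposal is correct and in places more explicit than the paper: your computation $\cp(Fn,Fk)\cong\Pos(n,UFk)\cong\ct_{\cv}(k,1)^{n}=\ct_{\cv}^{\op}(n,k)$ spells out the identification $\cp\cong\ct_{\cv}^{\op}$ that the paper leaves implicit in its one-line appeal to Theorem~\ref{perfect2} for $(1)\Rightarrow(2)$, and your tracking of the forgetful functors ($\mathrm{ev}_1$ versus $\ck(G,-)$ versus $U$) matches what Remark~\ref{monad}(2) asserts about concreteness.
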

\begin{proof}
(1)$\Leftrightarrow$(3) follows from \ref{monad}.

(1)$\Rightarrow$(2) follows from \ref{perfect2}.

(2)$\Rightarrow$(1):   Following \cite[Proposition 8.1]{KS}, $\Sind\ct^{\op}\subseteq\Mod\ct$. Due to  \cite[Theorem 7.7]{LR}, $\ct^{\op}$ is the category of free $\ct$-algebras on finite discrete posets. Due to \ref{perfect2}, $\Sind\ct^{\op}=\Mod\ct$. And we already know that $\Sind\ct^{\op}$
is a concretely equivalent to  variety of ordered algebras. 
\end{proof}
  
  %%%%%%%%%%%%%%%%%%%%%%%%%%%%%%%%
\section{Varieties as Concrete Categories}\label{sec4}% 

In Section~4 we have characterized varieties $\cv$  of ordered algebras as abstract categories enriched  over $\Pos$. 
In the present section we derive a characterization of the forgetful functors $U\colon \cv \to \Pos$, i.e., varieties as concrete categories. As mentioned in Related Work, Bloom and Wright presented a characterization in \cite{BW}.
 In this section we try and compare this with our results. For categories which are exact (in the enriched sense  over $\Pos$) another such characterization is due to Kurz and Velebil (\cite{KV}, Theorem 5.7), however, we are not working with exactness in our paper.

There is not much difference between characterizing varieties abstractly or concretely:

\begin{remark}\label{re4.1} Let $\ck$ be a category with tensors.

(1) Every  generator $G$  defines a faithful functor
$$
U= \ck(G,-) \colon \ck \to \Pos
$$
with a left adjoint
$$
\phi = -\otimes G\colon \Pos \to \ck\,.
$$

(2) Every faithful functor $U\colon \ck \to \Pos$ with a left adjoint $F$ has the above form for the generator  $G=\phi 1$.

\end{remark}

Thus Theorem~\ref{T:main} has the following

\begin{corollary}\label{co4.2}
A concrete category $(\ck, U)$ over $\Pos$  is equivalent to a variety iff

\begin{enumerate}
\item[{\rm(1)}] $\ck$ has tensors,  subkernel pairs and reflexive coinserters, and

\item[{\rm(2)}] $U$ is a finitary right adjoint which reflects isomorphisms and preserves reflexive coinserters.
\end{enumerate}
\end{corollary}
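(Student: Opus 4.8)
The plan is to derive Corollary~\ref{co4.2} directly from Theorem~\ref{T:main} by translating the concrete hypotheses on $(\ck,U)$ into the abstract hypotheses of that theorem via the generator $G=F1$ supplied by Remark~\ref{re4.1}. First I would invoke Remark~\ref{re4.1}(2): since $U$ is faithful with left adjoint $F$, we have $U\cong\ck(G,-)$ for $G=F1$, so the forgetful functor is (naturally isomorphic to) the hom-functor of $G$, and $-\otimes G$ is (naturally isomorphic to) $F$. This identification is the backbone of the argument: every abstract property of $G$ required by Theorem~\ref{T:main} will be read off from a corresponding property of $U$ and $F$.

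Next I would verify the three properties of $G$ one at a time. For \emph{strong generator}: a faithful $U$ makes $G$ a generator, and the existence of copowers follows from the left adjoint $F$ (copowers are $M\otimes G=F(M)$ for $M$ discrete, as in Proposition~\ref{P:lim}(2)); that every object is an extremal quotient of a copower should follow from the counit $\varepsilon_K\colon FUK\to K$, i.e. the canonical map $c_K$, together with faithfulness of $U$ ensuring the counit components are extremal epimorphisms. For \emph{subeffective projective}: this is precisely the hypothesis that $U=\ck(G,-)$ preserves reflexive coinserters, which is stated in Condition~(2). For \emph{abstractly finite}: this is exactly the content of the word ``finitary'' applied to the right adjoint $U$ in Condition~(2), once one checks that $U$ finitary (preserving filtered colimits) translates, via $U\cong\ck(G,-)$ and the tensor adjunction, into the factorization-through-a-finite-subtensor condition of Definition~\ref{D3.15}. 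The hypotheses in Condition~(1) that $\ck$ has subkernel pairs and reflexive coinserters are carried over verbatim.

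The main obstacle I anticipate is the bookkeeping in the two directions of the equivalence. For the \emph{sufficiency} direction (the listed conditions imply $\ck\simeq$ variety) one feeds the verified data into Theorem~\ref{T:main}; but one must also check that the resulting equivalence is \emph{concrete}, i.e. commutes with the forgetful functors. This requires noting that the full embedding $E\colon\ck\to\Sigma\mbox{-}\Alg$ constructed in the proof of Theorem~\ref{T:main} satisfies $U_\Sigma\cdot E=\ck(G,-)\cong U$, so $E$ is a concrete equivalence onto its (varietal) image. For the \emph{necessity} direction (a variety satisfies the conditions), I would appeal to Example~\ref{E:var} and Remark~\ref{R5.5}: in a variety the free algebra $G$ on one generator has $\ck(G,-)\cong U$, this $U$ is a finitary right adjoint (monadic, by \cite{ADV}), it preserves reflexive coinserters, and it reflects subregular epimorphisms because in a variety subregular epimorphisms coincide with surjective homomorphisms, which $U$ detects by faithfulness together with the characterization in Example~\ref{E:pos}.

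The one genuinely nontrivial point to get right is the equivalence ``$U$ reflects subregular epimorphisms'' $\Leftrightarrow$ the extremal-quotient clause in the definition of strong generator, since subregular epimorphisms in $\Pos$ are exactly the surjections (Example~\ref{E:pos}(3)); reflecting them along $U$ is what guarantees that the counit $c_K$, whose image under $U$ is a surjection, is itself a subregular epimorphism in $\ck$, which is precisely what drives the density argument of Theorem~\ref{P1}. Everything else is a matter of transporting definitions across the adjunction $-\otimes G\dashv\ck(G,-)$, and I would state these translations briefly rather than reprove them.
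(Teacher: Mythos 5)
Your route coincides with the paper's: both directions of Corollary~\ref{co4.2} are derived from Theorem~\ref{T:main} via the generator $G=F1$ of Remark~\ref{re4.1}, with ``finitary'' translated into abstract finiteness of $G$, ``preserves reflexive coinserters'' into subeffective projectivity, and the concreteness of the equivalence read off from $U_\Sigma\cdot E=\ck(G,-)\cong U$ exactly as in the paper's final remark. However, one step fails as you state it. The claim that ``faithfulness of $U$ ensures the counit components are extremal epimorphisms'' is wrong: faithfulness of a right adjoint yields only that the counits are \emph{epimorphisms}, and in the $\Pos$-enriched setting epimorphisms can be very far from extremal or subregular (Example~\ref{E:pos}(4): $\mathbb{Z}\hookrightarrow\mathbb{Q}$ in ordered monoids). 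Your closing paragraph in fact contains the correct mechanism, which is also the paper's: $U\varepsilon_K$ is a split epimorphism (split by $\eta_{UK}$), hence surjective, hence subregular in $\Pos$, and the hypothesis that $U$ \emph{reflects} subregular epimorphisms then makes $\varepsilon_K$ a subregular quotient of the tensor $UK\otimes G$; the paper packages this by exhibiting $U\varepsilon_K$ as the split coequalizer $T^2UK\rightrightarrows TUK\to UK$ of the Eilenberg--Moore algebra. So the appeal to faithfulness should be deleted and the reflection hypothesis made to carry the strong-generator clause throughout.

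In the necessity direction there is a second, smaller gloss: the assertion that in a variety subregular epimorphisms coincide with surjective homomorphisms is precisely the nontrivial content of ``$U$ reflects subregular epimorphisms,'' and neither faithfulness nor Example~\ref{E:pos} (which concerns $\Pos$ itself) delivers it directly. The paper's proof supplies the missing step: given $e\colon K\to L$ with $Ue$ surjective, the subkernel pair $\pi_0,\pi_1\colon A\to UK$ of $Ue$ in $\Pos$ (with $A=(UL)_{Ue}$ as in Example~\ref{E:pos}(3)) is shown to \emph{lift} to the variety, i.e., $\langle\pi_0,\pi_1\rangle\colon A\hookrightarrow (UK)^2$ carries a subalgebra $K_0$ of $K^2$ with the projections homomorphisms, whence $e$ is the coinserter of $\pi_0,\pi_1\colon A\to K_0$ in $\ck$. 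Your pointer toward mimicking the $\Pos$ argument is the right idea, but this lifting of the pair to a subalgebra is the step that must actually be verified; with these two repairs your proposal matches the paper's proof.
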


\begin{proof}
(a) Let $U$ be the forgetful functor of a variety $\ck$. From Example~\ref{E:var} we know  that $U$ is a right adjoint preserving reflexive  coinserters. Following Proposition \ref{P:surj}, $U$ reflects subregular epimorphisms.
Hence it reflects isomorphisms.
%Let us verify that $e\colon K\to L$ is a subregular epimorphism whenever $Ue$ is subregular, i.e., surjective, see Example~\ref{E:coreg}. Indeed, $Ue$ is a  coinserter of its subkernel pair $\pi_0$, $\pi_1\colon A\to UK$ where $A = (UL)_{Ue}$ in the notation of Example~\ref{E:pos}. It is easy to verify that  the subposet $\langle \pi_0, \pi_1\rangle \colon A\hookrightarrow (UK)^2$ carries a subalgebra $K_0$ of $K^2$ for which $\pi_i \colon A\to K_0$ are homomorphisms. It follows that $e$ is a  coinserter of $\pi_0$, $\pi_1\colon A \to K_0$ in $\ck$.

\vskip 1mm
(b) Conversely, let (1) and (2) hold. Denote by $\phi \colon \Pos \to \ck$ the left adjoint of $U$ and by $T =U\phi$ the corresponding monad.  

The object $G= \phi 1$ is a strong generator: 
the functor 
$$
E\colon \ck\to \Pos^{\ck(G,G)^{\op}}
$$ 
of Notation \ref{N:str}(2) reflects 
isomorphisms because $U$ does, and we have $U\cong V.E$ for the forgetful functor $V\colon \Pos^{\ck(G,G)^{\op}}\to \Pos$. Indeed, the posets $\ck(G,K)$ are isomorphic to $UK$ (naturally in $K\in \ck$).

Since $U$ is finitary, $G$ is finitely presentable, thus abstractly finite (Example \ref{E:abstract}(3)),  and since $U$ preserves reflexive coinserters, $G$ is an effective projective.

In the proof of Theorem~\ref{T:main} we have presented a variety $\bar \ck$ and an equivalence $E\colon \ck \to \bar \ck$ assigning to every object $K$ an algebra on the poset $\ck (G,K)= UK$ and to every morphism $f\colon K\to L$ a homomorphism carried by $\ck(G,f) = Uf$. Thus, $E$ is an equivalence of concrete categories.
\end{proof}

\begin{remark}\label{re4.3}
The above corollary is related to the characterization of varieties of ordered algebras due to Bloom and Wright \cite{BW}. Their main theorem works with possibly infinitary signatures, but we  present now the formulation just for the  finitary ones to make the comparison clearer. 
\end{remark}

\begin{theorem}[Bloom and Wright \cite{BW} ]%3.4
A concrete category $(\ck, U)$ over $\Pos$ is isomorphic to a variety iff
\begin{enumerate}
\item[{\rm(1)}] $\ck$ has coinserters and 
\item[{\rm(2)}] $U$ is a finitary right adjoint which
\begin{enumerate}
\item[a.] preserves and reflects subregular epimorphisms,
\item[b.] reflects subkernel pairs, and
\item[c.] creates isomorphisms.
\end{enumerate}
\end{enumerate}
\end{theorem}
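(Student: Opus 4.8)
The plan is to force $U$ to be $\Pos$-monadic for a finitary monad that preserves sifted colimits and then to invoke the identification recalled in Remark~\ref{monad} of such categories of algebras with varieties; condition (2c) is exactly what turns this monadic comparison into a concrete \emph{isomorphism} rather than a mere equivalence. Necessity is straightforward: if $(\ck,U)$ is a variety, then by Example~\ref{E:var} the functor $U$ is monadic for a finitary monad, hence a finitary right adjoint that creates weighted limits and creates isomorphisms. Creation of weighted limits gives (2b), subkernel pairs being weighted limits (Remark~\ref{re2.9}(5)), and creation of isomorphisms is (2c). That $U$ preserves and reflects subregular epimorphisms is Example~\ref{E:pos} together with step~(a) of the proof of Corollary~\ref{co4.2}: in both $\ck$ and $\Pos$ these are the surjective homomorphisms, which $U$ evidently preserves, while reflection is checked there. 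Finally $\ck$ is cocomplete, so (1) holds.

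For sufficiency the first step is to recognise $\ck$ as a category of algebras. Write $\phi$ for the left adjoint of $U$ and $\mathcal{T}=U\phi$ for the induced monad, finitary because $U$ preserves filtered colimits and $\phi$ preserves all colimits. By (2c) the functor $U$ reflects isomorphisms, and by (1) the category $\ck$ has all coinserters; as the coinserters of $U$-split pairs are absolute they are automatically preserved by $U$, so the $U$-split form of the enriched Beck monadicity theorem (cf.~\cite{K}) applies and the comparison functor $\ck\to\Pos^{\mathcal{T}}$ is a concrete equivalence. Since (2c) makes $U$ amnestic and the Eilenberg--Moore forgetful functor is amnestic as well, this equivalence between amnestic concrete categories is actually a concrete \emph{isomorphism} $\ck\cong\Pos^{\mathcal{T}}$; this is the single point at which the full strength of (2c) is used.

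The remaining step, and the one I expect to be the main obstacle, is to show that $\mathcal{T}$ preserves reflexive coinserters, equivalently that $U$ does: together with finitariness this makes $\mathcal{T}$ preserve all sifted colimits (Remark~\ref{R5.5}(1)), and then \cite[Theorem~4.5]{ADV} identifies $\Pos^{\mathcal{T}}$ with a variety up to concrete isomorphism, finishing the proof. To attempt this, let $c\colon X\to C$ be the coinserter in $\ck$ of a reflexive pair $u_0,u_1$. Then $Uc$ is a surjection by (2a) and factors through the coinserter $\gamma$ of $Uu_0,Uu_1$ formed in $\Pos$ by a surjection $q\colon Q\to UC$; the goal is that $q$ is an order-embedding, hence an isomorphism. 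Forming the subkernel pair of $c$ in the now-complete $\ck$ and pushing it through the right adjoint $U$ produces the subkernel pair of $Uc$, and since in $\Pos$ every surjection is the coinserter of its own subkernel pair (Example~\ref{E:pos}(3)), $Uc$ is the coinserter of that pair. The hard part will be to prove that the order recorded by the subkernel pair of $Uc$ is already generated by the inserted inequalities $Uu_0\le Uu_1$ — that no comparisons survive in $UC$ beyond those forced in $Q$ — and it is precisely here that (2a) and (2b), combined with the reflexive splitting, must be brought to bear. Everything surrounding this verification is formal.
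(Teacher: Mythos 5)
First, a point of orientation: the paper contains no proof of this statement at all --- it is quoted as Bloom and Wright's theorem with a citation to \cite{BW}, and the authors explicitly remark afterwards that ``there does not seem to be a direct proof of one of the above result from the other one,'' i.e., that deriving this theorem from their Corollary~\ref{co4.2} (or vice versa) is not straightforward. Your proposal attempts exactly such a derivation through the paper's machinery (monadicity plus \cite[Theorem~4.5]{ADV} via Remark~\ref{monad}), so there is no proof in the paper to compare it with, and it must stand on its own. It does not, for two concrete reasons. The first is the monadicity step: you assert that ``the coinserters of $U$-split pairs are absolute [so] they are automatically preserved by $U$.'' This conflates pairs that are split in $\ck$ with pairs that become split only after applying $U$: for a $U$-split pair the splitting lives in $\Pos$, not in $\ck$, so the coinserter formed in $\ck$ is not an absolute colimit, and its preservation by $U$ is precisely the nontrivial hypothesis of the enriched Beck theorem --- if it were automatic, that theorem would be vacuous. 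Condition (2a) only tells you that $U$ sends the coinserter to a surjection, i.e., to \emph{some} subregular epimorphism in $\Pos$; it does not give $Uc$ the universal property of the coinserter of $Uu_0$, $Uu_1$, which is exactly the question of whether your comparison map $q$ is invertible.

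The second and decisive gap is the one you flag yourself: the verification that $U$ preserves reflexive coinserters (equivalently, that $q\colon Q\to UC$ is an order-embedding, i.e., that no comparabilities appear in $UC$ beyond those generated from $Uu_0\leq Uu_1$) is never carried out --- you only say that (2a), (2b) and the reflexive splitting ``must be brought to bear.'' But this is the mathematical heart of the theorem; without it you have neither monadicity (given the first gap) nor the hypotheses of \cite[Theorem~4.5]{ADV}, so the whole sufficiency direction collapses to a plan. There is also a circularity lurking in your sketch of this step: you form the subkernel pair of $c$ ``in the now-complete $\ck$,'' but completeness of $\ck$ is only available after monadicity has been established, which in turn was to rest on the preservation property being proved. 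Finally, even the cosmetic last step is not formal: amnesticity of both sides (your reading of (2c)) does not by itself promote a concrete equivalence to a concrete isomorphism --- one also needs transportability-type conditions (cf.\ \cite{AHS}) --- so the passage from ``equivalent'' to ``isomorphic,'' which is the very point distinguishing this theorem from Corollary~\ref{co4.2}, would need a separate argument as well. In short: the necessity direction is essentially fine, but the sufficiency direction has an incorrect absoluteness claim and an admitted, unclosed hole at exactly the place the paper's authors warn is hard.
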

Condition (2c) is clearly related to the fact that the theorem deals with isomorphic categories rather than equivalent ones.

There does not seem to be a direct proof of one  of the above results from the other one. 

%\noindent
%{\bf Data Availability Statement (DAS).} Data sharing not applicable to this article as no datasets were generated or analysed during the current study.

\end{document}